\documentclass[twoside,12pt,leqno]{article}
%\pagestyle{plain}
%\linespread{1.6}
\usepackage{amsmath, amsthm, amsfonts, amssymb, color}
\usepackage{mathrsfs}
\usepackage{fancyhdr}

\setlength{\topmargin}{0cm} \setlength{\oddsidemargin}{0cm}
\setlength{\evensidemargin}{0cm} \setlength{\textwidth}{16.5truecm}
\setlength{\textheight}{22truecm}

\newcommand{\be}{\begin{eqnarray}}
\newcommand{\ee}{\end{eqnarray}}
\newcommand{\ce}{\begin{eqnarray*}}
\newcommand{\de}{\end{eqnarray*}}

\newtheorem{thm}{Theorem}[section]

\newtheorem{lem}[thm]{Lemma}

\theoremstyle{definition}
\newtheorem{defn}{Definition}[section]

\definecolor{wco}{rgb}{0.5,0.2,0.3}

\numberwithin{equation}{section} \theoremstyle{remark}
\newtheorem{rem}{Remark}[section]

\def\<{\langle} \def\>{\rangle}  
    
\def\d{\text{\rm{d}}}   
\def\E{\mathbb E}  
\def\beg{\begin} \def\beq{\begin{equation}}

 \def\P{\mathbb P} 
 
 \def\ee{\varepsilon}

\def\[{{\Big[}}
\def\]{{\Big]}}

\def\({{\Big(}}
\def\){{\Big)}}

\allowdisplaybreaks[1]% allow break one formula into two pages

\begin{document}

%To appear in: J. Funct. Anal. (2010),  doi:10.1016/j.jfa.2010.05.012 .

\bigbreak

\title{ {\bf Existence and uniqueness of solutions to stochastic functional differential equations in infinite dimensions
\thanks{Research supported in part  by NSFC (No.11301026) and DFG through IRTG 1132 and CRC 701}\\} }
\author{{\bf Michael R\"{o}ckner}$^{\mbox{c},}$, {\bf Rongchan Zhu}$^{\mbox{a},}$\thanks{
 Corresponding author}, {\bf Xiangchan Zhu}$^{\mbox{b},}$
\date {}
\thanks{E-mail address:  roeckner@math.uni-bielefeld.de(M. R\"{o}ckner), zhurongchan@126.com(R. C. Zhu), zhuxiangchan@126.com(X. C. Zhu)}\\ \\
$^{\mbox{a}}$Department of Mathematics, Beijing Institute of Technology, Beijing 100081,
 China,\\
$^{\mbox{b}}$School of Science, Beijing Jiaotong University, Beijing 100044, China\\
$^{\mbox{c}}$ Department of Mathematics, University of Bielefeld, D-33615 Bielefeld, Germany,}

\maketitle

\begin{abstract}
\vskip 0.1cm \noindent In this paper, we present a general framework for solving  stochastic functional differential equations in infinite dimensions in the sense of martingale solutions, which can be applied to a large class of SPDE with finite delays, e.g. $d$-dimensional stochastic fractional Navier-Stokes equations with delays, $d$-dimensional stochastic reaction-diffusion equations with delays, $d$-dimensional stochastic porous media equations with delays. Moreover, under local monotonicity conditions for the nonlinear term  we obtain the existence and uniqueness of strong solutions to SPDE with delays.

\vspace{1mm}
\noindent{\footnotesize{\bf 2000 Mathematics Subject Classification AMS}:\hspace{2mm} 60H15, 60H30, 35R60}%Five letters(Mathematics Subject Classification 2000)
 \vspace{2mm}

\noindent{\footnotesize{\bf Keywords}:\hspace{2mm}  stochastic functional equation, stochastic partial differential equations with delay, martingale problem, local monotonicity}

\end{abstract}
\vspace{1mm}

\section{Introduction}

Recently  stochastic partial differential equations (SPDE) with delays have been paid a lot of attention (see e.g [BBT14], [BDT12], [CR04] [TLT02], [J10] and the references therein). There is a large amount of literature on the mathematical theory and
on applications of stochastic functional (or delay) differential equations (see e.g. [M84], [RS08], [HMS09] and the reference therein).
When one wants to model some evolution phenomena arising in physics,
biology, engineering, etc., some hereditary characteristics such as after-effect,
time-lag or time-delay can appear in the variables. Typical examples arise in the mathematical modelling  of materials with termal memory, biochemical reactions,
population models, etc. (see, for instance, [R96], [W96] and references cited
therein). So, one is naturally lead to use  functional differential equations which take into account the
history of the system. However, in most cases,  randomness affects the model
, so that the system should be modelled by a stochastic
 functional equation.

There are a lot of studies about  existence and uniqueness of probabilistically strong solutions for various classes of
nonlinear SPDEs with time delays in a variational framework ( see e.g. [CGR02], [CLT00], [J10]): Let $$V\subset H_0\cong H_0^*\subset V^*$$
 be a Gelfand triple and
  \begin{equation}\begin{split}
&\d x(t)=[A_1(t,x(t))+A_2(t,x_t)] \d t+B(t,x_t) \d W(t), t\in[0,\infty),\\
 & x(s)=\psi(s), s\in[-h,0],
 \end{split}
\end{equation}
  where $W$ is a cylindrical Wiener process on a separable Hilbert space $U$,  $x_t(s):=x(t+s)$ and $A_1:\mathbb{R}^+\times V\to V^*$ is $\mathcal{B}(\mathbb{R}^+)\otimes\mathcal{B}(V)$-measurable,  $A_2:\mathbb{R}^+\times C([-h,0];H_0)\to V^*$ is $\mathcal{B}(\mathbb{R}^+)\otimes\mathcal{B}(C([-h,0];H_0))$-measurable, $B:
\mathbb{R}^+\times C([-h,0];H_0)\to L_{2}(U;H_0)$ is $\mathcal{B}(\mathbb{R}^+)\otimes\mathcal{B}(C([-h,0];H_0))$-measurable.
 In the above references the authors suppose that $A_1$, i.e. the nonlinear term without delay, satisfies monotonicity conditions, whereas $A_2$, i.e.  the nonlinear term with delay, satisfies Lipschitz conditions with respect to a suitable norm.
If we consider  more general equations with nonlinear terms not satisfying monotonicity conditions such as Navier-Stokes equations and more interesting delay terms like $A_2(t,x)=\int_{-h}^0 f(x(t+r))dr\cdot \nabla x(t-r_1(t))+\nabla b(x(t-r_2(t)))$, we cannot apply the above result. Here $r_1,r_2:\mathbb{R}\rightarrow[0,h]$ and $f$ is a bounded Lipschitz continuous function on $\mathbb{R}$, $b$ is a linear growth and continuous function on $\mathbb{R}$

To obtain the existence and uniqueness of solutions for the equations containing more general nonlinear terms and more interesting delays, we tried to use the monotonicity trick and assumed that the nonlinear terms satisfy local monotonicity conditions as Wei Liu and the first named author did in [LR10]. However, when we apply this method, the local monotonicity condition should be:
 There exist  locally bounded measurable functions $\rho: V\rightarrow[0,+\infty)$ and $\rho_1: C([-h,0];H_0)\rightarrow[0,+\infty)$ such that
\begin{equation}\aligned&{ }_{V^*}\!\langle A_1(t,\xi(t))-A_1(t,\eta(t))+ A_2(t,\xi_t)-A_2(t,\eta_t),\xi(t)-\eta(t)\rangle_V+\|B(t,\xi_t)-B(t,\eta_t)\|_2^2
\\\leq& [\rho(\eta(t))+\rho_1(\eta_t)]\|\xi(t)-\eta(t)\|_{{H_0}}^2, \endaligned\end{equation}
for $\xi,\eta\in C([-h,\infty),H_0)\cap L^p_{\rm{loc}}([-h,\infty);V)$ with $p\geq 1$. Here the middle norm is the $H_0$-norm, not the norm for the paths like $C([-h,0],H_0)$-norm, which is not natural and cannot even cover the Lipschitz case mentioned above.

Instead, in this paper we take a different approach. First, we provide  a general framework to prove  existence  of solutions (see Theorems 2.1 and 2.2) under very weak assumptions (see (H1), (H2), (H3)) in the sense of D. W. Stroock and S.R.S. Varadhan's martingale problem (see [SV79]), which can be applied to a large class of SPDEs with delays, such as  stochastic fractional Navier-Stokes equations with delay in any dimensions (see Section 5.1),  stochastic reaction-diffusion equations with delay (see Section 5.2) and stochastic porous media equations with delay in any dimension (see Section 5.3). We also emphasize that for our existence results, we only assume continuity, coercivity and growth conditions written in terms of integrals over time, which enables us to cover a large number of equations with interesting delays (see Remark 2.2 (ii)). Second, under local monotonicity conditions for the nonlinear terms  we obtain  pathwise uniqueness for SPDEs with delays, which implies existence and uniqueness of (probabilistically) strong solution by the Yamada-Watanabe Theorem.  Here we also emphasize that our local monotonicity condition (H4) can cover the examples we mentioned above. It is clearly weaker than the Lipschitz condition and (1.2), since instead of the middle norm $H_0$ we use  a norm for the paths and the local term in the local monotonicity condition may depend on the paths of two solutions.

This paper is organized as follows: In Section 2, we introduce the general framework and state the main results on  existence of  martingale solutions. In Section 3, we give the proofs of the main results from Section 2. In Section 4 we concentrate on the uniqueness of  probabilistically strong solutions. In Section 5, we apply the main results from Sections 2 and 4 to stochastic fractional Navier-Stokes equations with delays in any dimensions (see Section 5.1),  to stochastic reaction-diffusion equations with delays (see Section 5.2) and to stochastic porous media equations with delays in any dimensions (see Section 5.3).

\section{Existence of Martingale solutions}

Let
  $(\mathbb{H}, \<\cdot,\cdot\>_\mathbb{H})$ be a separable
Hilbert space and $\mathbb{X}, V,  \mathbb{Y}$ be three separable and reflexive  Banach spaces with norms $\|\cdot\|_\mathbb{X}$, $\|\cdot\|_V$ and $\|\cdot\|_\mathbb{Y}$ such that
 $$\mathbb{X}\subset V\subset\mathbb{Y}\subset\mathbb{H}$$
 continuously and densely. By Kuratowski's theorem we have that $\mathbb{X}\in\mathcal{B}(\mathbb{V})$, $V\in\mathcal{B}(\mathbb{Y})$, $\mathbb{Y}\in\mathcal{B}(\mathbb{H})$ and $\mathcal{B}(\mathbb{Y})=\mathcal{B}(\mathbb{H})\cap \mathbb{Y}, \mathcal{B}(\mathbb{X})=\mathcal{B}(V)\cap \mathbb{X}, \mathcal{B}(V)=\mathcal{B}(\mathbb{Y})\cap V$.

  If we identify the dual of $\mathbb{H}$ with itself, we get
 $$\mathbb{X}\subset\mathbb{H}\simeq\mathbb{H}^*\subset\mathbb{X}^*$$
 continuously and densely.

 If  ${ }_{\mathbb{X}^*}\<\cdot,\cdot\>_{\mathbb{X}}$ denotes  the dualization
between  $\mathbb{X}$ and its dual space $\mathbb{X}^*$,  then it follows that
$$ { }_{\mathbb{X}^*}\<u, v\>_{\mathbb{X}}=\<u, v\>_\mathbb{H}, \  v\in \mathbb{X} ,u\in \mathbb{H}^*.$$
Moreover, we assume that
 $\mathbb{X}$ is a Hilbert space and $\mathbb{X}\subset \mathbb{H}$ compactly.

 Let
$$\{e_1,e_2,\cdots \}\subset\mathbb{X}$$ be an orthonormal basis of $\mathbb{H}$ and
 let $\mathbb{H}_n:=span\{e_1,\cdots,e_n\}$ and $\mathcal{E}:=span\{e_1,e_2,\cdots\}$ is dense in $\mathbb{X}^*$. Let $\Pi_n:\mathbb{X}^*\rightarrow \mathbb{H}_n$ be defined by
$$ \Pi_ny:=\sum_{i=1}^n { }_{\mathbb{X}^*}\<y,e_i\>_{\mathbb{X}} e_i, \ y\in \mathbb{X}^*.  $$
Since $\mathbb{X}\subset \mathbb{H}$ is compact and hence so is $\mathbb{H}(\simeq\mathbb{H}^*)\subset\mathbb{X}^*$, we can choose $\{e_n:n\in\mathbb{N}\}\subset\mathbb{X}$ in such a way that
$$\|\Pi_ny\|_{\mathbb{X}*}\leq \|y\|_{\mathbb{X}*}, \quad\forall n\in\mathbb{N}, y\in\mathbb{X}^*,$$
(cf the proof of [AR89, Proposition 3.5]).

\begin{rem} $\mathbb{X}^*$ will be the space where the SPDE will initially hold. $\mathbb{H}$ will eventually be the state space for the solutions of the equation. $\mathbb{Y}$ and $V$ are spaces to identify integrability properties of the solutions. In applications we choose $\mathbb{Y}=\mathbb{H}$ (see examples in Sections 5.1 and 5.2) or $\mathbb{Y}=V$ (see the example in Section 5.3).   \end{rem}

Let $\{W_t\}_{t\geq0}$ be a cylindrical Wiener process on a
separable
 Hilbert space $U$ w.r.t a complete filtered probability space
$(\Omega,\mathcal{F}^0,(\mathcal{F}_t^0),\mathbb{P})$ and $(L_2(U;\mathbb{H}),$ $
 \|\cdot\|_{L_2(U,\mathbb{H})}) $  denotes
the space of all Hilbert-Schmidt operators from $U$ to $\mathbb{H}$.
Let $$\mathfrak{C}:=C([-h,0];\mathbb{X}^*)\cap L^\infty(-h,0;\mathbb{H}),$$  with norm
$$\|\psi\|_\mathfrak{C}:=\sup_{-h\leq s\leq0}\|\psi(s)\|_\mathbb{H},\quad \psi\in \mathfrak{C},$$
and $$\mathfrak{C}_\infty:=C([-h,\infty);\mathbb{X}^*)\cap L_{\rm{loc}}^\infty([-h,\infty);\mathbb{H}),$$
equipped with the corresponding metric,
$$L_\mathbb{Y}^p:=L^p([-h,0];\mathbb{Y}),\quad L_V^p:=L^p([-h,0];V)\quad \textrm{ and }\quad L_\mathbb{H}^p:=L^p([-h,0];\mathbb{H}),\quad  p>1.$$
Since $\mathbb{H}\subset \mathbb{X}^*$ is compact, it is easy to see that
$$\mathfrak{C}=C_{bw}([-h,0];\mathbb{H}),$$
where the latter denotes the set of all norm-bounded weakly continuous functions from $[-h,0]$ to $\mathbb{H}$. Then obviously $\mathfrak{C}$ with the above norm is Polish.

Given a path $y \in C([-h,+\infty);\mathbb{X}^*)$ and $t\geq0$, we associate with it a  path  $y_t\in C([-h,0];\mathbb{X}^*)$ by setting $y_t(s)=y(t+s),s\in[-h,0]$.

We
consider the following stochastic evolution equation
 \begin{equation}\begin{split}\label{SEE}
&\d x(t)=[A_1(t,x(t))+A_2(t,x_t)] \d t+B(t,x_t) \d W(t), t\in[0,\infty),\\
 & x(s)=\psi(s), s\in[-h,0],
 \end{split}
\end{equation}
 where  $A_1:\mathbb{R}^+\times V\to \mathbb{X}^*$ is $\mathcal{B}(\mathbb{R}^+)\otimes\mathcal{B}(V)$-measurable,  $A_2:\mathbb{R}^+\times (L^p_V\cap \mathfrak{C})\to \mathbb{X}^*$ is $\mathcal{B}(\mathbb{R}^+)\otimes\mathcal{B}(L^p_V\cap\mathfrak{C})$-measurable, $B:
\mathbb{R}^+\times (L^p_V\cap\mathfrak{C})\to L_{2}(U;\mathbb{H})$ is $\mathcal{B}(\mathbb{R}^+)\otimes\mathcal{B}(L^p_V\cap\mathfrak{C})$-measurable for some $p\geq2$.   Now we fix  $p$ and suppose that
for every $ u  \in \mathbb{S}_1:=L^p_{\rm{loc}}([-h,\infty);V)\cap \mathfrak{C}_\infty$,
$$A_1(\cdot, u (\cdot))\in L^1_{\rm{loc}}([0,\infty);\mathbb{X}^*), A_2(\cdot, u_\cdot)\in L^1_{\rm{loc}}([0,\infty);\mathbb{X}^*).$$

Let us now state the precise conditions on the coefficients of equation
(\ref{SEE}):

First we introduce the following function class $\mathfrak{U}^q, q\geq1$: A lower semi-continuous function
$\mathcal{N}:\mathbb{Y}\rightarrow[0,\infty]$ belongs to $\mathfrak{U}^q$ if $\mathcal{N}(y)=0$ implies $y=0$, and
$$\mathcal{N}(cy)\leq c^q\mathcal{N}(y), \quad \forall c\geq0, y\in\mathbb{Y},$$
and
$$\{y\in\mathbb{Y}:\mathcal{N}(y)\leq 1\} \textrm{ is  compact  in } \mathbb{Y}.$$

Then we can extend $\mathcal{N}$ to a $\mathcal{B}(\mathbb{X}^*)/\mathcal{B}([0,\infty])$-measurable function on $\mathbb{X}^*$ by setting $\mathcal{N}(x)=\infty, x\in \mathbb{X}^*\backslash\mathbb{Y}$.

Suppose that there exists $\mathcal{N}_1\in \mathfrak{U}^p$ for  $p\geq2$ as above such that
\begin{equation}\label{bd8}\|x\|_V^p\lesssim\mathcal{N}_1(x), \quad \forall x\in\mathbb{Y},\end{equation}
where $\|x\|_V:=+\infty$ if $x\in\mathbb{Y}\backslash V$, and  the following holds: $\|\Pi_nA_1(\cdot,\cdot)\|_{\mathbb{X}^*}$ is  locally bounded on $\mathbb{R}^+\times \mathbb{H}_n$,
$\|\Pi_nA_2(\cdot,\cdot)\|_{\mathbb{X}^*}, \|\Pi_nB(\cdot,\cdot)\|_{L_2(U,\mathbb{H})} $ are  bounded on balls in $\mathbb{R}^+\times C([-h,0);\mathbb{H}_n)$ and
 for every $t>0$, $\Pi_n A_1(t,\cdot)$ is continuous on $\mathbb{H}_n$, $\Pi_n A_2(t,\cdot), \Pi_n B(t,\cdot)$ are continuous on $C([-h,0];\mathbb{H}_n)$ and $\mathcal{N}_1(x)\leq C_n \|x\|_{\mathbb{H}_n}^p$ for $x\in\mathbb{H}_n$.

(H1) (Weakened demicontinuity) If $\{u^n,u, n\in\mathbb{N}\}\subset \mathfrak{C}_\infty$ with  $ u^n$ converging to $ u $ in $\mathbb{S}_0:=L^p_{\rm{loc}}([-h,\infty);\mathbb{Y})\cap C([-h,\infty);\mathbb{X}^*) $ and weakly in $L^p_{\rm{loc}}([-h,\infty);V)$, then for every $R>0$ and  cut-off function $\chi_R\in C_0^\infty(\mathbb{R})$ satisfying
$$\chi_R(x)=\left\{\begin{array}{ll}1&\ \ \ \ \textrm{ for } |x|\leq R,\\0&\ \ \ \ \textrm{ for } |x|>2R,\end{array}\right.$$ we have for all $v\in\mathbb{X}, t\geq0$,
\begin{equation}\label{cd1}\lim_{n\rightarrow\infty}\int_0^t{ }_{\mathbb{X}^*}\!\langle A_1(s, u^n(s)),v\rangle_{\mathbb{X}}\textrm{ }\chi_R(\| u ^n(s)\|_\mathbb{H})\d s=\int_0^t{ }_{\mathbb{X}^*}\!\langle A_1(s, u (s)),v\rangle_{\mathbb{X}}\textrm{ }\chi_R(\| u (s)\|_\mathbb{H})\d s,\end{equation}
 \begin{equation}\label{cd2}\lim_{n\rightarrow\infty}\int_0^t{ }_{\mathbb{X}^*}\!\langle A_2(s, u _s^n),v\rangle_{\mathbb{X}}\d s=\int_0^t{ }_{\mathbb{X}^*}\!\langle A_2(s, u _s),v\rangle_{\mathbb{X}}\d s,\end{equation}
and
$$\lim_{n\rightarrow\infty}\int_0^t\|B^*(s, u ^n_s)(v)-B^*(s, u _s)(v)\|_U\d s=0,$$
where $B^*$ denotes the adjoint operator of $B$.

(H2) (Coercivity) There exist  locally bounded measurable functions $\lambda_1, c_1:\mathbb{R}\mapsto\mathbb{R}^+$ such that for $  u \in C([-h,\infty);\mathbb{X})$, $t\geq0$ with $\int_0^t\mathcal{N}_1(u(s))\d s<\infty$
$$\aligned&\int_0^t{ }_{\mathbb{X}^*}\!\langle A_1(s, u (s))+A_2(s, u _s), u (s)\rangle_{\mathbb{X}}\d s\\\leq& -\frac{1}{2}\int_0^t\mathcal{N}_1( u (s))\d s+c_1(t)\int_{-h}^0\mathcal{N}_1( u (s))\d s+\lambda_1(t)\int_0^t(1+\| u _s\|_\mathfrak{C}^2)\d s.\endaligned$$

(H3) (Growth condition)  There exist locally bounded measurable functions $\lambda_2, \lambda_3, \lambda_4:\mathbb{R}\mapsto\mathbb{R}^+$ and $\gamma'\geq \gamma>1$ such that for all  $ u \in \mathbb{S}_1$, $t\geq0$
$$\int_0^t\|A_1(s, u (s))\|_{\mathbb{X}^*}^{\gamma}\d s\leq \lambda_2(t)[\int_0^t\mathcal{N}_1( u (s))\d s]^{\gamma'}+\lambda_3(t)(1+\sup_{s\in[0,t]}\| u (s)\|^{\gamma'}_\mathbb{H}),$$
$$\int_0^t\|A_2(s, u _s)\|_{\mathbb{X}^*}^{\gamma}\d s\leq \lambda_2(t)[\int_{-h}^t\mathcal{N}_1( u (s))\d s]^{\gamma'}+\lambda_3(t)(1+\sup_{s\in[-h,t]}\| u (s)\|^{\gamma'}_\mathbb{H}),$$
$$\|B(t, u _t)\|_{L_2(U;\mathbb{H})}^2\leq \lambda_4(t)(1+\| u _t\|^2_\mathfrak{C}).$$

\begin{rem} (i) If  $y^n$ strongly converges to $y$ in $\mathbb{Y}$, for every $v\in\mathbb{X}, s\in\mathbb{R}^+$,
$$\lim_{n\rightarrow\infty}{ }_{\mathbb{X}^*}\!\langle A_1(s,y^n),v\rangle_{\mathbb{X}}={ }_{\mathbb{X}^*}\!\langle A_1(s,y),v\rangle_{\mathbb{X}},$$
and $$\mathcal{N}_1(\cdot)\asymp\|\cdot\|_V^p,$$
 (\ref{cd1}) in (H1) holds by (H3) and the dominated convergence theorem. In this case
 (\ref{cd2}) is equivalent to the following demicontinuity: if
$u^n$ converge to $u$ in $\mathbb{S}_0$ and weakly in $L^p_{\rm{loc}}([-h,\infty);V)$,
$A_2(\cdot,u_\cdot^n)$ converge to $A_2(\cdot,u_\cdot)$ weakly in $L^1_{\rm{loc}}([0,\infty);\mathbb{X}^*)$.

(ii) Since we consider stochastic  differential equations with delay, the coercivity conditions and growth conditions we assume are written in terms of integrals over time, more precisely in the dualization between $L^1_{\rm{loc}}([-h,\infty);\mathbb{X}^*)$ and $L^\infty_{\rm{loc}}([-h,\infty);\mathbb{X})$, which is of course weaker than the general coercivity conditions and growth conditions without time integrals. For (H2) and (H3) it is very natural to use $\int_{-h}^0\mathcal{N}_1( u (s))\d s$ to control the growth of the nonlinear term, which also enables us to cover more interesting nonlinear terms with delay (see Section 5).
\end{rem}

Let $\Omega_0:=C([-h,\infty);\mathbb{X}^*)$  with the metric
$$\rho(x,y):=\sum_{m=1}^\infty\frac{1}{2^m}(\sup_{s\in[-h,m]}\|x(s)-y(s)\|_{\mathbb{X}^*}\wedge1).$$
Define the canonical process $x:\Omega_0\rightarrow \mathbb{X}^*$ as $x(t,\omega):=\omega(t)$. For $t\geq0$ define the $\sigma$-algebra by $$\mathcal{F}_t:=\sigma\{x(r):r\leq t\}.$$
and $-h\leq t\leq0$, $\mathcal{F}_t:=\mathcal{F}_0$, $\mathcal{F}:=\vee_{t\geq0}\mathcal{F}_t$.

\beg{defn} (Martingale Solution) Given an initial value $\psi\in \mathfrak{C}$, a probability measure $P\in\mathcal{P}(\Omega_0)$ (=all probability measure on $\Omega_0$),  is called a martingale solution of Eq. (1.1) if

(M1) $P(x(t)=\psi(t), -h\leq t\leq 0)=1$ and for every $n\in\mathbb{N}$
$$P\left\{x\in\Omega_0:\int_0^n\|A_1(s,x(s))+A_2(s,x_s)\|_{\mathbb{X}^*} \d s+\int_0^n\|B(s,x_s)\|_{L_2(U;\mathbb{H})}^2\d s<+\infty\right\}=1;$$
(M2) for every $l\in\mathcal{E}$ the process
$$M_l(t,x):={ }_{\mathbb{X}^*}\<x(t),l\>_{\mathbb{X}}-\int_0^t{ }_{\mathbb{X}^*}\<A_1(s,x(s))+A_2(s,x_s),l\>_{\mathbb{X}}ds, t\geq0,$$
is a continuous square integrable $(\mathcal{F}_t)$-martingale with respect to $P$, whose quadratic variation process is given by
$$\langle M_l\rangle(t,x):=\int_0^t\|B^*(s,x_s)(l)\|_U^2\d s,t\geq0.$$
\end{defn}

\begin{rem} By Kuratowski's theorem $L_V^p\in\mathcal{B}(\mathfrak{C})$ and $\mathcal{B}(L_V^p\cap \mathfrak{C})=\mathcal{B}(\mathfrak{C})\cap L_V^p$. Furthermore,  $A_2:\mathbb{R}^+\times (L^p_V\cap \mathfrak{C})\to \mathbb{X}^*$ is $\mathcal{B}(\mathbb{R}^+)\otimes\mathcal{B}(L^p_V\cap\mathfrak{C})$-measurable and the map $x\rightarrow x_s$ is $\mathcal{F}_s$-measurable, which imply that for $s\geq0$, $A_2(s,x_s)$ is $(\mathcal{F}_s)$-adapted.
\end{rem}

Now we can state the first main result.

\beg{thm}\label{T1}
 Suppose $(H1)-(H3)$ hold for some $p\geq 2$.
    Then for every $\psi\in \mathfrak{C}$ satisfying
    \begin{equation}\label{ini}\mathcal{N}(\psi):=\sup_n\int_{-h}^0\mathcal{N}_1(\Pi_n\psi(t))\d t+\int_{-h}^0\mathcal{N}_1(\psi(t))\d t<\infty,\end{equation}
    $(\ref{SEE})$
    has a martingale solution $P$ such that for every $q\in\mathbb{N}$, $T>0$
$$\E^P\left(\sup_{t\in[-h,T]}\|x(t)\|_\mathbb{H}^{2q}+[\int_0^T\mathcal{N}_1(x(t))\d t]^q \right)   < \infty.$$

\end{thm}

\begin{rem} (i) Condition (\ref{ini}) ensures the crucial  energy estimate in Lemma \ref{L2} below by the coercivity condition (H2). If $\mathcal{N}_1(\Pi_n\psi)\leq \mathcal{N}_1(\psi)$ for $\psi\in V$, which  holds in the examples in Sections 5.1, 5.2,  then condition (\ref{ini}) is satisfied if $\int_{-h}^0\mathcal{N}_1(\psi(t))\d t<\infty$. In general,  condition (\ref{ini})  holds if $\psi$ is smooth enough.

(ii) Condition (\ref{ini}) can  be dropped if $c_1=0$ in (H2) and  (\ref{cd2}) in (H1) holds if $ u^n$ converge to $ u $ in $L^p_{\rm{loc}}([-h,\infty);\mathbb{H})\cap C([-h,\infty);\mathbb{X}^*)$.
\end{rem}

\begin{defn} We say that there exists a weak solution of  equation (2.1) if given  every $\psi\in \mathfrak{C}\cap L^p_V$, there exists a stochastic basis $(\Omega,\mathcal{F},\{\mathcal{F}_t\}_{t\in [-h,\infty)},P)$, a cylindrical Wiener process $W$ on the space $U$ and a progressively measurable process $X:[-h,\infty)\times \Omega\rightarrow \mathbb{H}$, such that for  $P$-a.e. $\omega\in \varOmega$,
$$X(\cdot,\omega)\in \mathbb{S}_1,$$  and such that $P$-a.s.
$$X(t)=\psi(t), -h\leq t\leq 0,$$
and
$${ }_{\mathbb{X}^*}\langle X(t),l\rangle_{\mathbb{X}}-\int_0^t{ }_{\mathbb{X}^*}\langle A_1(s,X(s))+A_2(s,X_s),l\rangle_{\mathbb{X}} \d s
=\langle X(0),l\rangle_\mathbb{H}+\langle \int_0^tB(s,X_s)\d W(s),l\rangle_\mathbb{H},$$  for  $t\in [0,\infty)$ and all $l\in \mathbb{X}$.
\end{defn}

Moreover, by [On05, Theorem 2] and Theorem \ref{T1} we obtain the following results.

\beg{thm}\label{T2}
 Suppose $(H1)-(H3)$ hold for some $p\geq2$.
    Then for every $\psi\in   \mathfrak{C}$ with $\mathcal{N}(\psi)<\infty$,
    $(\ref{SEE})$
    has a weak solution $X:[-h,\infty)\times \Omega\rightarrow \mathbb{H}$ such that for every $q\in\mathbb{N}$, $T>0$
$$\E^P\left(\sup_{t\in[-h,T]}\|X(t)\|_\mathbb{H}^{2q}+[\int_{-h}^T\mathcal{N}_1(X(t))dt]^q \right)   < \infty.$$

\end{thm}

\section{Proof of Theorem \ref{T1}}

\subsection{Finite dimensional case}

Now we prove Theorem \ref{T1} for $U=\mathbb{H}=\mathbb{X}=\mathbb{R}^d$. In this case $$\Omega_0=C([-h,\infty);\mathbb{R}^d),$$
$$\mathfrak{C}:=C([-h,0];\mathbb{R}^d),$$
 $$A(t, u ):=A_1(t, u (0))+A_2(t, u ), \quad \forall  u \in \mathfrak{C} .$$
Define for $n\in\mathbb{N}$, $ x \in C([-h,\infty);\mathbb{R}^d)$
$$A^n(t, x ):=1_{\{t\leq n\}}\chi_n(\sup_{-h\leq s\leq t}\|x(s)\|_{\mathbb{R}^d})A(t, x_t), \quad B^n(t, x ):=1_{\{t\leq n\}}\chi_n(\sup_{-h\leq s\leq t}\|x(s)\|_{\mathbb{R}^d})B(t, x_t),$$
where $0\leq \chi_n\in C(\mathbb{R}^+;\mathbb{R})$ is a decreasing cutoff function with
$$\chi_n( r )=\left\{\begin{array}{ll}1&\ \ \ \ \textrm{ for } r\leq n,\\0&\ \ \ \ \textrm{ for } r>2n,\end{array}\right.$$
 Then for $n\in\mathbb{N}$, $A^n(\cdot,x(\cdot))$ and $B^n(\cdot,x(\cdot))$ are bounded and progressively measurable and for each $t>0$  $A^n(t,x(\cdot)), B^n(t,x(\cdot))$ are  continuous on $\Omega_0$ and $\mathcal{F}_t$-measurable. By a similar argument as in the proof of [SV79, Theorem 6.1.6] we obtain that there exists a probability measure $P_n\in\mathcal{P}(\Omega_0)$ such that $P_n(x(t)=\psi(t), -h\leq t\leq 0)=1$ and
$$M_n(t,x):=x(t)-x(0)-\int_0^tA^n(s,x)\d s, x\in \Omega_0,$$
is a continuous square integrable $(\mathcal{F}_t)$-martingale with square variation process
$$\langle M_n\rangle(t,x)=\int_0^t(B^n)^*(s,x)B^n(s,x)ds,t\geq0.$$
Although in [SV79, Theorem 6.1.6] it requires that the initial value takes values in $\mathbb{R}^d$, we can still use a similar argument as in the proof to obtain the above results even if the initial value belongs to $\mathfrak{C}$.

 By   It\^{o}'s formula and (H2), (H3) we have
\beq\aligned
&\|x(t)\|_\mathbb{H}^2\\=&\|x(0)\|_\mathbb{H}^2+\int_0^{t}\left( 2 \<A^n(s,x),x(s)\>+\|B^n(s,x)\|_{L_2(\mathbb{R}^d;\mathbb{R}^d)}^2\right)
\d s
+2\int_0^{t} x(s)\d M_n(s)\\=&\|x(0)\|_\mathbb{H}^2+\int_0^{t}\left(  1_{\{s\leq n\}}\chi_n(\sup_{-h\leq r\leq s}\|x(r)\|_{\mathbb{R}^d})[2\<A(s,x_s),x(s)\>+\|B^n(s,x_s)\|_{L_2(\mathbb{R}^d;\mathbb{R}^d)}^2]\right)
\d s
\\&+2\int_0^{t} x(s)\d M_n(s)\\=&\|x(0)\|_\mathbb{H}^2+ \chi_n(\sup_{-h\leq r\leq 0}\|x(r)\|_{\mathbb{R}^d})\int_0^{c}\left( [2\<A(s,x_s),x(s)\>+\|B(s,x_s)\|_{L_2(\mathbb{R}^d;\mathbb{R}^d)}^2]\right)
\d s
\\&+2\int_0^{t} x(s)\d M_n(s)\\
\le & \|\psi(0)\|_\mathbb{H}^2 + C \int_0^{t} \left( \|x_s\|_\mathfrak{C}^{2}
+ 1\right)\d s
+2\int_0^{t} x(s)\d M_n(s)+C\mathcal{N}(\psi),
\endaligned
\end{equation}
where $0\leq c\leq t\wedge n$, $C$ is a generic constant (independent of $n$) and we used mean value theorem for integrations in the third equality.
By the BDG(=Burkholder-Davis-Gundy) inequality, (H3) and Young's inequality we have
$$\aligned&\E^{{P}_n}(\sup_{r\in[0,t]}\|x(r)\|_\mathbb{H}^2)\\\leq& \|\psi(0)\|_\mathbb{H}^2 + C \E^{{P}_n}\int_0^t \left( \|x_s\|_\mathfrak{C}^{2}
+ 1\right)\d s
+2\E^{{P}_n} (\sup_{r\in[0,t]}|\int_0^r x(s)\d M_n(s)|)\\&+C\mathcal{N}(\psi)\\\leq&\|\psi(0)\|_\mathbb{H}^2 + C \E^{{P}_n}\int_0^t \left( \|x_s\|_\mathfrak{C}^{2}
+ 1\right)\d s+C\mathcal{N}(\psi)
\\&+C\E^{{P}_n} (\sup_{r\in[0,t]}\|x(r)\|_\mathbb{H}^2\int_0^t\|B(s,x_s)\|^2_{L_2(\mathbb{R}^d;\mathbb{R}^d)} \d s)^{1/2}\\\leq &\|\psi(0)\|_{\mathbb{H}}^2 + C \E^{{P}_n}\int_0^t \left( \|x_s\|_\mathfrak{C}^{2}
+ 1\right)\d s+C\mathcal{N}(\psi)
+\frac{1}{2}\E^{{P}_n}\sup_{r\in[0,t]}\|x(r)\|_\mathbb{H}^2.\endaligned$$
Hence by Gronwall's inequality we obtain for every $T>0$
$$\E^{{P}_n}(\sup_{r\in[-h,T]}\|x(r)\|_\mathbb{H}^2)\leq C_T(1+\sup_{t\in[-h,0]}\|\psi(t)\|_\mathbb{H}^2+\mathcal{N}(\psi)).$$
Similar as in the proof of Lemma 3.1 below we obtain that for
$q\in\mathbb{N}$ there exists $C_T>0$ such
that for all $n\in \mathbb{N}$
 we have
\begin{equation}\label{bdf1}
\E^{{P}_n}\sup_{t\in[-h,T]}\|x(t)\|_\mathbb{H}^{2q}
 \le C_T\left(\|\psi\|_\mathfrak{C}^{2q}+ 1+\mathcal{N}(\psi)^q\right),
\end{equation}
which combining with our assumption for $\mathcal{N}_1$ in finite dimensional case also implies that
$$
\E^{{P}_n}\int_{-h}^T\mathcal{N}_1(x(t))\d t
 \le C_T\left(\|\psi\|_\mathfrak{C}^{2p}+ 1+\mathcal{N}(\psi)^p\right),
$$

Moreover, as in infinite dimensional case (cf. Section 3.2 below) we have for some $\beta\in (0,1)$
$$\sup_n\E^{{P}_n}\left(\sup_{s\neq t\in[0,T]}\frac{|x(t)-x(s)|}{|t-s|^\beta}\right)\leq C,$$
which implies that for every $T>0$
 $({P}_n)_{n\in\mathbb{N}}$ is tight on $C([-h,+\infty);\mathbb{R}^d)$. Selecting a subsequence if necessary, we may assume that ${P}_n$ weakly converges to some probability measure $P$ on $C([-h,+\infty);\mathbb{R}^d)$. By Skorohod's representation theorem, there exists a probability space $(\tilde{\Omega},\tilde{\mathcal{F}},\tilde{P})$ and $C([-h,+\infty);\mathbb{R}^d)$-valued random variables $\tilde{x}^{n}$ and $\tilde{x}$ such that

(i) $\tilde{x}^{n}$ has the law ${P}_n$ for each $n\in\mathbb{N}$;

(ii) $\tilde{x}^{n}\rightarrow\tilde{x}$ in $C([-h,+\infty);\mathbb{R}^d)$, $\tilde{P}$-a.e., and $\tilde{x}$ has the law $P$.

 By similar arguments as in the infinite dimensional case, we have obtained  the existence of  martingale solutions in the finite dimensional case.

\subsection{Infinite dimensional case}

The first step of the proof  is mainly based on the Galerkin approximation.

Obviously, $\Pi_n|_\mathbb{H}$ is just the orthogonal projection onto $\mathbb{H}_n$ in $\mathbb{H}$ and we have
$$ { }_{\mathbb{X}^*}\<\Pi_nA_1(t,y), v\>_{\mathbb{X}}=\<\Pi_n A_1(t,y),v\>_\mathbb{H}={ }_{\mathbb{X}^*}\<A_1(t,y),v\>_{\mathbb{X}},  y\in  \mathbb{Y}, v\in \mathbb{H}_n, $$
and
$$ { }_{\mathbb{X}^*}\<\Pi_nA_2(t, u ), v\>_{\mathbb{X}}=\<\Pi_n A_2(t, u ),v\>_\mathbb{H}={ }_{\mathbb{X}^*}\<A_2(t, u ),v\>_{\mathbb{X}},   u \in \mathfrak{C}\cap L^p_\mathbb{Y}, v\in \mathbb{H}_n.  $$
Let $\{g_1,g_2,\cdots \}$ be an orthonormal basis of $U$ and
$$ W^{(n)}_t:=\sum_{i=1}^n\<W_t,g_i\>_U g_i=\tilde{\Pi}_n W_t, $$
where $\tilde{\Pi}_n$ is the orthogonal projection onto $span\{g_1,\cdots,g_n\}$ in $U$.

Now for each
finite $n\in \mathbb{N}$  we consider the following stochastic
equation on $\mathbb{H}_n$
\begin{equation}\begin{split}\label{approximation}
&\d x^{(n)}(t)=[\Pi_n A_1(t,x^{(n)}(t))+\Pi_n A_2(t,x^{(n)}_t)] \d t+ \Pi_n B(t,x_t^{(n)}) \d
W^{(n)}(t),\\
 & \ x^{(n)}(t)=\Pi_n \psi(t), t\in[-h,0].
\end{split}\end{equation}
Set $$\Omega^{(n)}:=C([-h,\infty);\mathbb{H}_n)$$
and
$$\mathcal{F}_t^{(n)}:=\mathcal{B}(C([-h,t];\mathbb{H}_n)), \mathcal{F}^{(n)}:=\vee_{t\geq0}\mathcal{F}_t^{(n)}.$$

By  the solvability of SDDE in the finite dimensional case (see Section 3.1) we know that
(\ref{approximation}) has a martingale solution, i.e. there exists a probability measure $P_n\in\mathcal{P}(\Omega^{(n)})$ such that (M1) and (M2) hold.

In order to construct the solution of (\ref{SEE}), we need some a
priori estimates for $x^{(n)}$.

\begin{lem}\label{L2}
 Under the assumptions in Theorem \ref{T1}, for every $q\in\mathbb{N}, T>0$ there exists $C_T>0$ such
that for all $n\in \mathbb{N}$
 we have
\begin{equation}\label{l2}
\E^{P_n}\sup_{t\in[-h,T]}\|x^{(n)}(t)\|_\mathbb{H}^{2q} +\E^{P_n}(\int_{-h}^T \mathcal{N}_1(x^{(n)}(t)) \d t)^q
 \le C_T\left(\|\psi\|_\mathfrak{C}^{2q}+ 1+\mathcal{N}(\psi)^q\right).
\end{equation}
\end{lem}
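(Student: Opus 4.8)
The plan is to derive the uniform estimate \eqref{l2} for the Galerkin approximations $x^{(n)}$ by applying It\^o's formula to $\|x^{(n)}(t)\|_\mathbb{H}^{2q}$, exactly paralleling the a priori bound already obtained in the finite-dimensional case in Section 3.1. The key point enabling the passage to $\mathbb{H}_n$ is the two displayed identities at the start of Section 3.2, namely
$$ { }_{\mathbb{X}^*}\<\Pi_nA_1(t,x^{(n)}(t)),x^{(n)}(t)\>_{\mathbb{X}}=\<A_1(t,x^{(n)}(t)),x^{(n)}(t)\>_\mathbb{H}, $$
and the analogous one for $A_2$, which let me replace the projected drift by the true drift inside the dualization so that the coercivity condition (H2) applies directly. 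I would first apply It\^o's formula for $q=1$ to get
$$\|x^{(n)}(t)\|_\mathbb{H}^2=\|\Pi_n\psi(0)\|_\mathbb{H}^2+\int_0^t\big(2\,{ }_{\mathbb{X}^*}\<A_1(s,x^{(n)}(s))+A_2(s,x^{(n)}_s),x^{(n)}(s)\>_{\mathbb{X}}+\|\Pi_nB(s,x^{(n)}_s)\|_{L_2(U;\mathbb{H})}^2\big)\d s+2\int_0^t\<x^{(n)}(s),\Pi_nB(s,x^{(n)}_s)\d W^{(n)}(s)\>_\mathbb{H}.$$

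Next I would bound the drift integral using (H2) to extract the crucial dissipative term $-\tfrac12\int_0^t\mathcal{N}_1(x^{(n)}(s))\d s$ on the right, control the delay term $c_1(t)\int_{-h}^0\mathcal{N}_1(x^{(n)}(s))\d s=c_1(t)\int_{-h}^0\mathcal{N}_1(\Pi_n\psi(s))\d s$ by $c_1(T)\,\mathcal{N}(\psi)$ using the definition of $\mathcal{N}(\psi)$ in \eqref{ini}, and absorb $\lambda_1(t)\int_0^t(1+\|x^{(n)}_s\|_\mathfrak{C}^2)\d s$ via the noise growth bound in (H3). For the martingale term I would take supremum in time and apply the BDG inequality together with Young's inequality to absorb a factor of $\tfrac12\,\E^{P_n}\sup_{r\le t}\|x^{(n)}(r)\|_\mathbb{H}^2$ into the left-hand side, the quadratic variation being controlled again by $\|\Pi_nB\|_{L_2}^2\le\lambda_4\,(1+\|x^{(n)}_s\|_\mathfrak{C}^2)$. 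Since $\|x^{(n)}_s\|_\mathfrak{C}^2\le\sup_{r\in[-h,s]}\|x^{(n)}(r)\|_\mathbb{H}^2$, a Gronwall argument then yields the $q=1$ case of \eqref{l2} with a constant independent of $n$; crucially the dissipative term also delivers the uniform bound on $\E^{P_n}\int_0^T\mathcal{N}_1(x^{(n)}(s))\d s$, while the contribution over $[-h,0]$ is again just $\mathcal{N}(\psi)$.

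For general $q\in\mathbb{N}$ I would apply It\^o's formula to $\|x^{(n)}(t)\|_\mathbb{H}^{2q}$ (equivalently iterate on $(\|x^{(n)}\|_\mathbb{H}^2)^q$), producing a drift factor $q\|x^{(n)}(s)\|_\mathbb{H}^{2q-2}$ times the same bracket, a second-order trace term of order $\|x^{(n)}(s)\|_\mathbb{H}^{2q-2}\|\Pi_nB\|_{L_2}^2$, and a martingale term; the dissipative $\mathcal{N}_1$-term again carries the top-order coercivity, the remaining terms are handled by Young's inequality and the bound $\|x^{(n)}_s\|_\mathfrak{C}\le\sup_{r\le s}\|x^{(n)}(r)\|_\mathbb{H}$, and BDG plus Gronwall close the estimate inductively in $q$. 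To obtain the $q$-th power of the $\mathcal{N}_1$-integral rather than just its first moment, I would keep $\big(\int_0^t\mathcal{N}_1(x^{(n)}(s))\d s\big)^q$ on the right after raising the $q=1$ inequality to the $q$-th power and using the elementary inequality $(a+b)^q\lesssim a^q+b^q$, then reabsorb it through the dissipativity; the term over $[-h,0]$ contributes $\mathcal{N}(\psi)^q$.

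The main obstacle I anticipate is purely the bookkeeping of making every constant genuinely independent of $n$: this rests on the uniform projection bound $\|\Pi_n y\|_{\mathbb{X}^*}\le\|y\|_{\mathbb{X}^*}$ chosen in Section 2 and on the fact that $\mathcal{N}_1(\Pi_n\psi)$ enters only through the already-finite quantity $\mathcal{N}(\psi)=\sup_n\int_{-h}^0\mathcal{N}_1(\Pi_n\psi(t))\d t+\int_{-h}^0\mathcal{N}_1(\psi(t))\d t$, so no $n$-dependence creeps in from the initial data. A secondary technical point is justifying the application of It\^o's formula and the identity $\<\Pi_nB(s,x^{(n)}_s)\d W^{(n)},x^{(n)}\>_\mathbb{H}$ for the quadratic variation, but since everything is finite-dimensional on $\mathbb{H}_n$ with $W^{(n)}$ a finite-dimensional Wiener process and the coefficients locally bounded and continuous by the standing assumptions, this is routine.
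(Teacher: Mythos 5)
Your proposal follows the paper's proof almost verbatim for $q=1$: It\^o's formula for $\|x^{(n)}(t)\|_\mathbb{H}^2$, the identification ${}_{\mathbb{X}^*}\langle\Pi_n A_i,v\rangle_{\mathbb{X}}={}_{\mathbb{X}^*}\langle A_i,v\rangle_{\mathbb{X}}$ for $v\in\mathbb{H}_n$ so that (H2) applies, the bound $c_1(t)\int_{-h}^0\mathcal{N}_1(\Pi_n\psi(s))\,\d s\le c_1(t)\mathcal{N}(\psi)$ coming from (\ref{ini}), and then BDG, Young and Gronwall. (Two cosmetic differences: since $P_n$ is only a martingale solution, the paper works directly with the martingale $M_n^{(1)}$ rather than a stochastic integral against $W^{(n)}$, and the noise coefficient is $\Pi_nB\tilde{\Pi}_n$ rather than $\Pi_nB$; neither changes any estimate since $\|\Pi_nB\tilde{\Pi}_n\|_{L_2(U,\mathbb{H})}\le\|B\|_{L_2(U,\mathbb{H})}$.)

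The one place you should be careful is $q\ge2$: your two suggested routes are not equivalent, and only one of them works. Applying It\^o's formula directly to $\|x^{(n)}(t)\|_\mathbb{H}^{2q}$ produces the drift integrand
$$q\,\|x^{(n)}(s)\|_\mathbb{H}^{2q-2}\;{}_{\mathbb{X}^*}\langle A_1(s,x^{(n)}(s))+A_2(s,x^{(n)}_s),x^{(n)}(s)\rangle_{\mathbb{X}},$$
i.e.\ the coercivity bracket appears \emph{weighted} by $\|x^{(n)}(s)\|_\mathbb{H}^{2q-2}$ inside the time integral. But (H2) is stated only for the unweighted time integral $\int_0^t{}_{\mathbb{X}^*}\langle A_1+A_2,u(s)\rangle_{\mathbb{X}}\,\d s$ (this integrated-in-time formulation is exactly what lets the paper handle the delay terms), so it cannot be invoked pointwise or under a weight, and the dissipative $\mathcal{N}_1$-term cannot be extracted along this route. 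The route the paper actually takes --- your fallback --- is the correct one: raise the $q=1$ pathwise inequality
$$\|x^{(n)}(t)\|_\mathbb{H}^{2}+\int_0^t\mathcal{N}_1(x^{(n)}(s))\,\d s\le \|\psi(0)\|_\mathbb{H}^2+C\int_0^t(1+\|x^{(n)}_s\|_\mathfrak{C}^2)\,\d s+2c_1(t)\mathcal{N}(\psi)+M_n^{(1)}(t,x^{(n)})$$
to the $q$-th power and use $(a+b+\cdots)^q\lesssim a^q+b^q+\cdots$ together with Jensen for the time integral; this yields $\|x^{(n)}(t)\|_\mathbb{H}^{2q}$ and $[\int_0^t\mathcal{N}_1(x^{(n)}(s))\,\d s]^q$ on the left simultaneously, and the martingale enters only through $|M_n^{(1)}(t,x^{(n)})|^q$, which BDG controls by $(\int_0^t\|x^{(n)}(s)\|_\mathbb{H}^2\|B(s,x^{(n)}_s)\|_{L_2}^2\,\d s)^{q/2}$. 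Finally, Gronwall cannot be applied before one knows that the relevant expectations are finite: the paper localizes with the stopping times $\tau_R^{(n)}=\inf\{t\in[0,T]:\|x^{(n)}(t)\|_\mathbb{H}>R\}\wedge T$ and removes them afterwards by monotone convergence; you should add this localization, which is routine but not optional.
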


\begin{proof} First by (M2) the following equality holds in $\mathbb{H}_n$
$$x^{(n)}(t)=\Pi_nx(0)+\int_0^t[\Pi_nA_1(r,x^{(n)}(r))+\Pi_nA_2(r,x_r^{(n)})]\d r+M_n(t,x^{(n)}),t\in[0,T],$$
where $M_n(t,x^{(n)}),t\in[0,T],$ is a continuous $\mathbb{H}_n$-valued $(\mathcal{F}_t^{(n)})$-martingale with respect to $P_n$, whose covariation operator process in $\mathbb{H}_n$ is given by
$$\langle M_n\rangle(t,x^{(n)})=\int_0^t\Pi_nB(s,x^{(n)}_s)\tilde{\Pi}_n\tilde{\Pi}_n^*B^*(s,x^{(n)}_s)\Pi_n^*\d s, t\geq0.$$
 By   It\^{o}'s formula and (H2)  we have
\beq\begin{split}\label{Ito estimate 2}
\|x^{(n)}(t)\|_\mathbb{H}^{2}=&\|x^{(n)}(0)\|_\mathbb{H}^{2}+\int_0^t[2\<A_1(s,x^{(n)}(s))+A_2(s,x^{(n)}_s),x^{(n)}(s)\>_{\mathbb{H}_n}
\\
&+\|\Pi_nB(s,x_s^{(n)})\tilde{\Pi}_n\|_{L_2(U,\mathbb{H})}^2]
\d s
+M_n^{(1)}(t,x^{(n)})\\
\le & \|\psi(0)\|_\mathbb{H}^{2}+\int_0^t\left(-\mathcal{N}_1(x^{(n)}(s))+(2\lambda_1(t)+\lambda_4(t))(1+\|x^{(n)}_s\|_\mathfrak{C}^2)
\right)
\d s+2c_1(t)\mathcal{N}(\psi)\\
&+M_n^{(1)}(t,x^{(n)}),
 \  \ t\in[0,T],
\end{split}
\end{equation}
where $M_n^{(1)}(t,x^{(n)})$ is a continuous real-valued $(\mathcal{F}_t^{(n)})$-martingale with respect to $P_n$, whose quadratic variation process is given by
$$\langle M_n^{(1)}\rangle(t,x^{(n)}):=4\int_0^t\|
(\Pi_nB(s,x_s^{(n)})\tilde{\Pi}_n)^* x^{(n)}(s)\|_{U}^2\d s,t\geq0.$$
By (\ref{Ito estimate 2}) we obtain that for $t\in[0,T]$
$$\|x^{(n)}(t)\|_\mathbb{H}^{2}+\int_0^t\mathcal{N}_1(x^{(n)}(s))\d s
\le  \|\psi(0)\|_\mathbb{H}^{2}+(2\lambda_1(t)+\lambda_4(t))\int_0^t(1+\|x^{(n)}_s\|_\mathfrak{C}^2)
\d s+2c_1(t)\mathcal{N}(\psi)+M_n^{(1)}(t,x^{(n)}),$$
which implies that for every $q\in \mathbb{N}$
\begin{equation}\label{Ito}\|x^{(n)}(t)\|_\mathbb{H}^{2q}+[\int_0^t\mathcal{N}_1(x^{(n)}(s))\d s]^q
\le  C\|\psi(0)\|_\mathbb{H}^{2q}+C\int_0^t(1+\|x^{(n)}_s\|_\mathfrak{C}^{2q})
\d s+C\mathcal{N}(\psi)^q+C|M_n^{(1)}(t,x^{(n)})|^q,\end{equation}
where $C$ depends on $q$ and $t$ and independent of $n$.
For every given $n\in\mathbb{N}$ we define the stopping time
$$  \tau_R^{(n)}=\inf\{t\in[0,T]: \|x^{(n)}(t)\|_\mathbb{H}>R   \}\wedge T , \ R>0. $$
Here  $\inf\emptyset:=\infty$. It is obvious that
$$ \lim_{R\rightarrow\infty} \tau_R^{(n)}=T,\  n\in\mathbb{N}.  $$

By the B-D-G inequality we have
\begin{equation}\begin{split}\label{estimate 3}
  & \E^{P_n}\sup_{r\in[0,t\wedge\tau_R^{(n)}]}\left|M_n^{(1)}(r,x^{(n)})  \right|^q \\
   \le & C(q)\E^{P_n}\left(\int_0^{t\wedge\tau_R^{(n)}}\|x^{(n)}(s)\|_\mathbb{H}^2 \|B(s,x_s^{(n)})\|_{L_2(\mathbb{U},\mathbb{H})}^2 \d s  \right)^{q/2}\\
   \le & C(q)\E^{P_n}\left( \int_0^{t\wedge\tau_R^{(n)}} \|x^{(n)}(s)\|_\mathbb{H}^2\left(\|x_s^{(n)}\|_\mathfrak{C}^2+1\right) \d s  \right)^{q/2}\\
   \le & C(q) \E^{P_n} \left[ \varepsilon \sup_{s\in[0,t\wedge\tau_R^{(n)}]}\|x^{(n)}(s)\|_\mathbb{H}^{2q}+ C_\varepsilon \left( \int_0^{t\wedge\tau_R^{(n)}} (\|x_s^{(n)}\|_\mathfrak{C}^2+1) \d s  \right)^{q}    \right] \\
\le &  \varepsilon \E^{P_n}\sup_{s\in[0,t\wedge\tau_R^{(n)}]}\|x^{(n)}(s)\|_\mathbb{H}^{2q}+
t^{q -1} C_\varepsilon \E^{P_n}\int_0^{t\wedge\tau_R^{(n)}} \left(\sup_{r\in[0,s]}\|x^{(n)}(r)\|_\mathbb{H}^{2q}+
1 \right)\d s+C_T\|\psi\|_\mathfrak{C}^{2q},\ t\in[0, T],
\end{split}
\end{equation}
where $\varepsilon>0$ is a small constant and $C_\varepsilon$ comes from  Young's
inequality and may change from line to line.

 Then by   $(\ref{Ito})$,  $(\ref{estimate 3})$  and Gronwall's
lemma   we have
$$\E^{P_n}\sup_{t\in[0, \tau_R^{(n)}]}\|x^{(n)}(t)\|_\mathbb{H}^{2q} +\E^{P_n}[\int_0^{\tau_R^{(n)}}  \mathcal{N}_1(x^{(n)}(s))\d s]^q
\le C\left(\|\psi\|_\mathfrak{C}^{2q}+ 1+\mathcal{N}(\psi)^q\right), \ n\ge 1,$$
where $C$ is a constant independent of $n$.

For $R\rightarrow\infty$, (\ref{l2}) now follows from the monotone
convergence theorem.
\end{proof}

\textbf{Proof of Theorem \ref{T1}.}
Since $\Omega^{(n)}=C([-h,\infty);\mathbb{H}_n)$ is a closed subset of $\Omega$. We extend $P_n$ to a probability measure
$\hat{P}_n$ on $(\Omega,\mathcal{F})$ by setting
$$\hat{P}_n(A):=P_n(A\cap \Omega_n), A\in\mathcal{F}.$$
We now show that $(\hat{P}_n)_{n\in\mathbb{N}}$ is tight on
$$\mathbb{S}:=\mathbb{S}_0\cap L^{p,w}_{\rm{loc}}([-h,\infty);V),$$  where $\mathbb{S}_0:=C([-h,\infty),\mathbb{X}^*)\cap L^p_{\rm{loc}}([-h,\infty);\mathbb{Y})$ and $L^{p,w}_{\rm{loc}}([-h,\infty);V)$ denotes the space $L^{p}_{\rm{loc}}([-h,\infty);V)$ equipped with the weak topology.

 By [GRZ09, Lemma 4.3], the reflexivity of  space $L^p(-h,T;V)$ for every $T$, the fact that $\mathcal{N}(\psi)<\infty$, and by Lemma 3.1 and (\ref{bd8}) we only need to prove that for some $\beta>0$ and every $T>0$
$$\sup_{n\in\mathbb{N}}\E^{\hat{P}_n}\left(\sup_{s,t\in[0,T],s\neq t}\frac{\|x(t)-x(s)\|_{\mathbb{X}^*}}{|t-s|^\beta}\right)=
\sup_{n\in\mathbb{N}}\E^{P_n}\left(\sup_{s,t\in[0,T],s\neq t}\frac{\|x^{(n)}(t)-x^{(n)}(s)\|_{\mathbb{X}^*}}{|t-s|^\beta}\right)<\infty.$$
By (H3) and Lemma 3.1 we have
\begin{equation}\label{bd2}\aligned&\E^{P_n}\left[\sup_{s,t\in[0,T],s\neq t}\left(\|\int_s^t\Pi_nA_1(r,x^{(n)}(r))+\Pi_nA_2(r,x^{(n)}_r)\d r\|_{\mathbb{X}^*}^\gamma/|t-s|^{\gamma-1}\right)\right]
\\\leq&
C_\gamma\E^{P_n}\left[\int_0^T\|\Pi_nA_1(r,x^{(n)}(r))\|_{\mathbb{X}^*}^\gamma+\|\Pi_nA_2(r,x^{(n)}_r)\|_{\mathbb{X}^*}^\gamma \d r\right]\\\leq& C_\gamma
\E^{P_n}\left[\int_0^T\|A_1(r,x^{(n)}(r))\|_{\mathbb{X}^*}^\gamma+\|A_2(r,x^{(n)}_r)\|_{\mathbb{X}^*}^\gamma \d r\right]\\\leq& C_\gamma \E^{P_n}[\int_{0}^T\mathcal{N}_1(x^{(n)}(r))\d r]^{\gamma'}+C_\gamma\E^{P_n}(\sup_{r\in[-h,T]}\|x^{(n)}(r)\|_\mathbb{H}^{\gamma'}+1)+C_\gamma\mathcal{N}(\psi)^{\gamma'}\leq C_{T,\gamma'},\endaligned\end{equation}
and for every $0\leq s<t\leq T$ and $q\in\mathbb{N}$
$$\aligned\E^{P_n}\|M_n(t,x^{(n)})-M_n(s,x^{(n)})\|_{\mathbb{H}}^{2q}\leq & C_q\E^{P_n}\left(\int_s^t\|B(r,x_r)\|_{L_2(U;\mathbb{H})}^2\d r\right)^q\\\leq&C_q|t-s|^{q-1}
\E^{P_n}\left(\int_s^t\|B(r,x_r)\|_{L_2(U;\mathbb{H})}^{2q}\d r\right)\\\leq&C_q|t-s|^{q}
\E^{P_n}(\sup_{s\in[-h,T]}\|x(s)\|_\mathbb{H}^{2q}+1)\\\leq&C_q|t-s|^{q}
(\|\psi\|_\mathfrak{C}^{2q}+\mathcal{N}(\psi)^q+1).\endaligned$$
By Kolmogorov's criterion, for every $\alpha\in (0,\frac{q-1}{2q})$ we get
$$\E^{P_n}\left(\sup_{s,t\in[0,T],s\neq t}\frac{\|M_n(t,x^{(n)})-M_n(s,x^{(n)})\|_{\mathbb{H}}^{2q}}{|t-s|^{q\alpha}}\right)\leq C_q
(\|\psi\|_\mathfrak{C}^{2q}+\mathcal{N}(\psi)^q+1).$$
Now we obtain that for some $\beta>0$
$$
\sup_{n\in\mathbb{N}}\E^{P_n}\left(\sup_{s,t\in[0,T],s\neq t}\frac{\|x^{(n)}(t)-x^{(n)}(s)\|_{\mathbb{X}^*}}{|t-s|^\beta}\right)<\infty,$$
which as we mentioned above implies that $(\hat{P}_n)_{n\in\mathbb{N}}$ is tight on $\mathbb{S}$.

By Skorohod's representation theorem in [J98], there exists a probability space $(\tilde{\Omega},\tilde{\mathcal{F}},\tilde{P})$ and $\mathbb{S}$-valued random variables $\tilde{x}^{(n)}$ and $\tilde{x}$ such that

(i) $\tilde{x}^{(n)}$ has the law $\hat{P}_n$ for each $n\in\mathbb{N}$;

(ii) $\tilde{x}^{(n)}\rightarrow\tilde{x}$ in $\mathbb{S}$, $\tilde{P}$-a.e., and $\tilde{x}$ has the law $P$.
(Here selecting a subsequence if necessary.)

Then we have
$$\aligned &P(x(t)=\psi(t),-h\leq t\leq0)=\tilde{P}(\tilde{x}(t)=\psi(t),-h\leq t\leq0)\\=&\lim_{n\rightarrow\infty}\tilde{P}(\tilde{x}^{(n)}(t)=\Pi_n\psi(t),-h\leq t\leq0)=\lim_{n\rightarrow\infty}\hat{P}_n(x(t)=\Pi_n\psi(t),-h\leq t\leq0)=1.\endaligned$$

For every $q\in\mathbb{N}$  set
$$\xi_q(t,x):=\sup_{r\in [-h,t]}\|x(r)\|_{\mathbb{H}}^{2q}+\left(\int_{0}^t\mathcal{N}_1(x(r))\d r\right)^q.$$
By Fatou's lemma, Lemma 3.1 and the lower semi-continuity of $\xi_q(t,x)$ with respect to $x$ on $\mathbb{S}$, we have
$$\aligned\E^P(\xi_q(t,x))=&\E^{\tilde{P}}[\xi_q(t,\tilde{x})]\leq \liminf_{n\rightarrow\infty}\E^{\tilde{P}}[\xi_q(t,\tilde{x}^{(n)})]
\\=&\liminf_{n\rightarrow\infty}\E^{\hat{P}_n}[\xi_q(t,x)]\\\leq&C
[\|\psi\|_\mathfrak{C}^{2q}+1+\mathcal{N}(\psi)^q].\endaligned$$

Now we verify (M2) for $P$.

Fixing $l\in\mathcal{E}$, we want to show $M_l(t,x),t\geq0,$ in (M2) is a continuous $(\mathcal{F}_t)$-martingale with respect to $P$, whose square variation process is given by
$$\langle M_l\rangle(t,x)=\int_0^t\|B^*(s,x_s)(l)\|^2_U\d s,t\geq0.$$
By Lemma 3.1 we have
\begin{equation}\label{bd1}\lim_{n\rightarrow\infty}\E^{\tilde{P}}|{ }_{\mathbb{X}^*}\langle \tilde{x}^{(n)}(t)-\tilde{x}(t),l\rangle_{\mathbb{X}}|=0.\end{equation}
Now for the nonlinear part,
set for $R>0$
$$G_R^1(t,x):=\int_0^t{ }_{\mathbb{X}^*}\langle A_1(s,x(s)),l\rangle_{\mathbb{X}}\cdot \chi_R(\|x(s)\|_\mathbb{H})\d s,$$
where $\chi_R\in C_0^\infty(\mathbb{R})$ is a cutoff function with $\chi_R(r)=1$ if $|r|\leq R$ and $\chi_R(r)=0$ if $|r|>2R$.
By (H1) we obtain $\tilde{P}$-a.s.
\begin{equation}\label{co2}\int_0^t{ }_{\mathbb{X}^*}\langle A_2(s,\tilde{x}_s^{(n)}),l\rangle_{\mathbb{X}}\d s\rightarrow\int_0^t{ }_{\mathbb{X}^*}\langle A_2(s,\tilde{x}_s),l\rangle_{\mathbb{X}}\d s, \quad n\rightarrow\infty, \end{equation}
and
$$G_R^1(t,\tilde{x}^{(n)})\rightarrow G_R^1(t,\tilde{x}),\quad n\rightarrow\infty.$$
Moreover, by (H3) and Lemma 3.1 we have
$$\aligned&\sup_n\E^{\tilde{P}}\left(\int_0^t\| A_1(s,\tilde{x}^{(n)}(s))\|_{\mathbb{X}^*}^\gamma \d s\right)\\=&\sup_n\E^{P_n}\left(\int_0^t\| A_1(s,x^{(n)}(s))\|_{\mathbb{X}^*}^\gamma \d s\right)<\infty,\endaligned$$
and $$\aligned&\sup_n\E^{\tilde{P}}\left(\int_0^t\| A_2(s,\tilde{x}_s^{(n)})\|_{\mathbb{X}^*}^\gamma \d s\right)\\=&\sup_n\E^{P_n}\left(\int_0^t\| A_2(s,x_s^{(n)})\|_{\mathbb{X}^*}^\gamma \d s\right)<\infty,\endaligned$$
which  implies that
$$\lim_{n\rightarrow\infty}\E^{\tilde{P}}|G_R^1(t,\tilde{x}^{(n)})-G_R^1(t,\tilde{x})|=0,$$
and
\begin{equation}\label{bd3}\lim_{n\rightarrow\infty}\E^{\tilde{P}}|\int_0^t[{ }_{\mathbb{X}^*}\langle A_2(s,\tilde{x}^{(n)}_s),l\rangle_{\mathbb{X}}-{ }_{\mathbb{X}^*}\langle A_2(s,\tilde{x}_s),l\rangle_{\mathbb{X}}]\d s|=0.\end{equation}
Set
$$G(t,x):=\int_0^t{ }_{\mathbb{X}^*}\langle A_1(s,x),l\rangle_{\mathbb{X}}\d s,$$
by (H3) and Lemma 3.1 we have
$$\aligned&\lim_{R\rightarrow\infty}\sup_n\E^{\tilde{P}}|G_R^1(t,\tilde{x}^{(n)})-G(t,\tilde{x}^{(n)})|
\\\leq&\|l\|_{\mathbb{X}}\lim_{R\rightarrow\infty}\sup_n[(\E^{\tilde{P}}\int_0^t
\|A_1(s,\tilde{x}^{(n)}(s))\|^\gamma_{\mathbb{X}^*}\d s)^{1/\gamma} (\int_0^t\tilde{P}(\|\tilde{x}^{(n)}(s)\|_\mathbb{H}\geq R)\d s)^{(\gamma-1)/\gamma}]\\\leq&\|l\|_{\mathbb{X}}\lim_{R\rightarrow\infty}\sup_n(
\E^{\tilde{P}}\int_0^t\|A_1(s,\tilde{x}^{(n)}(s))\|^\gamma_{\mathbb{X}^*}\d s)^{1/\gamma}(\E^{\tilde{P}}\int_0^t\|\tilde{x}^{(n)}(s)\|_\mathbb{H}^\gamma\d s)^{(\gamma-1)/\gamma}/R^{\gamma-1}\\=&\|l\|_{\mathbb{X}}\lim_{R\rightarrow\infty}\sup_n(\E^{P_n}\int_0^t
\|A_1(s,x^{(n)}(s))\|^\gamma_{\mathbb{X}^*}\d s)^{1/\gamma}(\E^{P_n}\int_0^t\|x^{(n)}(s)\|_\mathbb{H}^\gamma\d s)^{(\gamma-1)/\gamma}/R^{\gamma-1}=0,\endaligned$$
and similarly
$$\aligned&\lim_{R\rightarrow\infty}\E^{\tilde{P}}|G_R^1(t,\tilde{x})-G(t,\tilde{x})|
=0,\endaligned$$
which  imply that
\begin{equation}\label{co4}\lim_{n\rightarrow\infty}\E^{\tilde{P}}|G(t,\tilde{x}^{(n)})-G(t,\tilde{x})|=0.\end{equation}

On the other hand, by Lemma 3.1 and (H3) we have
\begin{equation}\label{bd4}\aligned&\lim_{n\rightarrow\infty}\E^{\tilde{P}}|\int_0^t{ }_{\mathbb{X}^*}\langle \Pi_nA_1(s,\tilde{x}^{(n)}(s))+\Pi_nA_2(s,\tilde{x}^{(n)}_s),l\rangle_{\mathbb{X}}-{ }_{\mathbb{X}^*}\langle A_1(s,\tilde{x}^{(n)}(s))+A_2(s,\tilde{x}^{(n)}_s),l\rangle_{\mathbb{X}}\d s|\\\leq&\lim_{n\rightarrow\infty}\E^{\tilde{P}}|\int_0^t{ }_{\mathbb{X}^*}\langle A_1(s,\tilde{x}^{(n)}(s))+A_2(s,\tilde{x}^{(n)}_s),(\Pi_n-I)l\rangle_{\mathbb{X}}\d s|=0.\endaligned\end{equation}
Combining (\ref{bd1}), (\ref{bd3}), (\ref{co4}) and (\ref{bd4}) we obtain that for $t>0$
\begin{equation}\label{bd5}\lim_{n\rightarrow\infty}\E^{\tilde{P}}|\langle M_n(t,\tilde{x}^{(n)}),l\rangle-M_l(t,\tilde{x})|=0.\end{equation}
Let $t>s\geq0$ and $g$ be every bounded and real valued $\mathcal{F}_s$-measurable continuous function on $\mathbb{S}$. Using (\ref{bd5}) we have
$$\aligned\E^P((M_l(t,x)-M_l(s,x))g(x))
=&\E^{\tilde{P}}((M_l(t,\tilde{x})-M_l(s,\tilde{x}))g(\tilde{x}))\\
=&\lim_{n\rightarrow\infty}
\E^{\tilde{P}}((\langle M_n(t,\tilde{x}^{(n)}),l\rangle-\langle M_n(s,\tilde{x}^{(n)}),l\rangle)g(\tilde{x}^{(n)}))\\=&\lim_{n\rightarrow\infty}
\E^{\hat{P}_n}((\langle M_n(t,x),l\rangle-\langle M_n(s,x),l\rangle)g(x))\\=&0.\endaligned$$

On the other hand by the B-D-G inequality  (H3) and Lemma 3.1 we have
$$\aligned\sup_n\E^{\tilde{P}}|\langle M_n(t,\tilde{x}^{(n)}),l\rangle|^{2q}\leq& C\sup_n\E^{\tilde{P}}\left(\int_0^t \|B^*(s,\tilde{x}^{(n)}_s)(l)\|_U^2\d s\right)^{q}\\\leq& C\sup_n\E^{\tilde{P}}\left(\int_0^t \|B^*(s,\tilde{x}^{(n)}_s)(l)\|_U^{2q}\d s\right)<+\infty.\endaligned$$
Then by (\ref{bd5}) we obtain that for $t\geq0$
$$\lim_{n\rightarrow\infty}\E^{\tilde{P}}|\langle M_n(t,\tilde{x}^{(n)}),l\rangle-M_l(t,\tilde{x})|^2=0,$$
and by (H1)
$$\lim_{n\rightarrow\infty}\E^{\tilde{P}}\left(\int_0^t \|(\Pi_nB\tilde{\Pi}_n)^*(s,\tilde{x}^{(n)}_s)(l)-B^*(s,\tilde{x}_s)(l)\|_U^2\d s\right)=0,$$
which imply that
$$\E^P\left(M_l^2(t,x)-\int_0^t\|B^*(r,x_r)(l)\|^2_{U}\d r|\mathcal{F}_s\right)=M_l^2(s,x)-\int_0^s\|B^*(r,x_r)(l)\|^2_{U}\d r.$$
Now the result follows.
$\hfill\Box$

\section{Uniqueness of the solutions}
Now we consider the pathwise uniqueness of the solutions. We introduce another Hilbert space $H_0$, which is the space where we obtain uniqueness of the solutions.

 Let
$$V\subset H_0\cong H_0^*\subset V^*$$
 be a Gelfand triple, $i.e.$  $(H_0, \<\cdot,\cdot\>_{H_0})$ is a separable
Hilbert space, identified with its dual space by the Riesz
isomorphism, $V$ is as in Section 2 and it is
 continuously and densely embedded into $H_0$. If  ${ }_{V^*}\<\cdot,\cdot\>_V$ denotes  the dualization
between  $V$ and its dual space $V^*$,  it follows that
$$ { }_{V^*}\<u, v\>_V=\<u, v\>_{H_0}, \  u\in H_0 ,v\in V.$$
Since $\mathbb{X}\subset V$ continuously  we have that
 $$V^*\subset \mathbb{X}^*,$$ and
 there exists a bounded linear operator $\Lambda:\mathbb{X}\rightarrow V$  such that
\begin{equation}\label{eq5}{ }_{V^*}\<u, \Lambda v\>_V={ }_{\mathbb{X}^*}\<u,  v\>_{\mathbb{X}}, u\in V^*,v\in \mathbb{X}.\end{equation}
In fact, since $\mathbb{X}\subset V$ continuously, we have that for $u\in H_0\subset V^*\subset \mathbb{X}^*,v\in \mathbb{X}$
$$|{ }_{\mathbb{X}^*}\<u,  v\>_{\mathbb{X}}|\leq \|u\|_{\mathbb{X}^*}\|v\|_{\mathbb{X}}\leq C\|u\|_{V^*}\|v\|_{\mathbb{X}}\leq C\|u\|_{H_0}\|v\|_{\mathbb{X}}, $$
which implies that there exists some $\Lambda_0v\in H_0$ such that for $u\in H_0$
$$\<u, \Lambda_0 v\>_{H_0}={ }_{\mathbb{X}^*}\<u,  v\>_{\mathbb{X}},$$
hence $v\mapsto\Lambda_0 v$ is linear.
Moreover since $$|\<u, \Lambda_0 v\>_{H_0}|=|{ }_{\mathbb{X}^*}\<u,  v\>_{\mathbb{X}}|\leq C\|u\|_{V^*},$$
by extension (\ref{eq5}) follows.

Suppose that for $p\geq2$ as in Section 2, $A_1:\mathbb{R}^+\times V\to V^*$ is $\mathcal{B}(\mathbb{R}^+)\otimes\mathcal{B}(V)$-measurable;
$A_2:\mathbb{R}^+\times (L_V^p\cap \mathfrak{C})\to V^*$ is $\mathcal{B}(\mathbb{R}^+)\otimes\mathcal{B}(L_V^p\cap \mathfrak{C})$-measurable; $B:
\mathbb{R}^+\times (L_V^p\cap \mathfrak{C})\to L_{2}(U;H_0)$ is $\mathcal{B}(\mathbb{R}^+)\otimes\mathcal{B}(L_V^p\cap \mathfrak{C})$-measurable.

 There exist locally bounded measurable functions $c_2, c_3:\mathbb{R}\rightarrow\mathbb{R}^+$ and $\rho_1: \mathbb{R}^+\times (\mathfrak{C}\cap L_V^p)\rightarrow[0,+\infty)$ such that for $ u ,\eta\in \mathbb{S}_1$,
$A_1(\cdot, u (\cdot)), A_2(\cdot, u _\cdot)\in L^1_{\rm{loc}}([0,\infty);V^*), B(\cdot, u _\cdot)\in L^2_{\rm{loc}}([0,\infty);L_2(U;H_0))$ and every $t\geq0$:

 (H4) (Local monotonicity)
$$\aligned&2\int_0^t{ }_{V^*}\!\langle A_1(s, u (s))+A_2(s, u _s)-A_1(s,\eta(s))-A_2(s,\eta_s), u (s)-\eta(s)\rangle_V\d s
\\\leq& \int_{0}^t[\rho_1(s,\eta_s)+\rho_1(s, u _s)]\| u _s-\eta_s\|_{C([-h,0];H_0)}^2\d s+c_2(t)\int_{-h}^0\mathcal{N}_1( u (s)-\eta(s))\d s, \endaligned$$
$$\aligned&\int_0^t\|B(s, u _s)-B(s,\eta_s)\|_{L_2(U;H_0)}^2\d s\\\leq& \int_{0}^t[\rho_1(s,\eta_s)+\rho_1(s, u _s)]\| u _s-\eta_s\|_{C([-h,0];H_0)}^2\d s+c_2(t)\int_{-h}^0\mathcal{N}_1( u (s)-\eta(s))\d s, \endaligned$$
and
$$\int_{0}^t\rho_1(s,\eta_s)\d s\leq c_3(t)[\int_{-h}^t\mathcal{N}_1(\eta(s))\d s]^q+c_3(t)(1+\sup_{s\in[-h,t]}\|\eta(s)\|_\mathbb{H}^{2q}),$$
for some $q\in\mathbb{N}$.

(H5) (Growth condition) There exist locally bounded measurable functions $\lambda_5, \lambda_6:\mathbb{R}\mapsto\mathbb{R}^+$ such that for  $ u \in  \mathbb{S}_1$, we have for  every $t\geq0$,
$$\int_0^t[\|A_1(s, u (s))\|_{V^*}^{\frac{p}{p-1}}+\|A_2(s, u _s)\|_{V^*}^{\frac{p}{p-1}}]\d s\leq  \lambda_5(t)[\int_{-h}^t\mathcal{N}_1( u (s))\d s]^q+\lambda_5(t)(\sup_{s\in[-h,t]}\| u (s)\|_\mathbb{H}^{2q}+1),$$
$$\|B(t, u _t)\|_{L_2(U;H_0)}^{2}\leq \lambda_6(t)(1+\| u _t\|^{2}_\mathfrak{C}),$$
for some $q\in\mathbb{N}$.

By (H5) and Theorem \ref{T2} we obtain that
\beg{thm}\label{T3}
 Suppose $(H1)-(H3), (H5)$ hold for some $p\geq2$.
    Then for every $\psi\in  C([-h,0];  H_0)\cap \mathfrak{C}$ with $\mathcal{N}(\psi)<\infty$,
    $(\ref{SEE})$
    has a weak solution $X\in C([-h,\infty);H_0)$ such that for every $q\in\mathbb{N}$, $T>0$
$$\E^P\left(\sup_{t\in[-h,T]}\|X(t)\|_\mathbb{H}^{2q}+[\int_{-h}^T\mathcal{N}_1(X(t))\d t]^q \right)   < \infty,$$
and
$$\E^P\left(\sup_{t\in[-h,T]}\|X(t)\|_{H_0}^{2}\right)\leq C.$$

\end{thm}

\proof   By Theorem 2.2 we have that for every $l\in \mathbb{X}$, $t\in[0,T]$
$${ }_{\mathbb{X}^*}\langle X(t),l\rangle_{\mathbb{X}}-\int_0^t{ }_{\mathbb{X}^*}\langle A_1(s,X(s))+A_2(s,X_s),l\rangle_{\mathbb{X}} \d s
=\langle X(0),l\rangle_\mathbb{H}+\langle \int_0^tB(s,X_s)\d W(s),l\rangle_\mathbb{H}.$$
By (H5) and Theorem 2.2 we know that for every $T>0$
\begin{equation}\label{bd6}\E^P\int_0^T[\|A_1(s,X(s))\|_{V^*}^{\frac{p}{p-1}}+\|A_2(s,X_s)\|_{V^*}^{\frac{p}{p-1}}]\d s\leq C, \end{equation}
\begin{equation}\label{bd7}\E^P\int_0^T \|B(s,X_s)\|_{L_2(U;H_0)}^2\d s\leq C,\end{equation}
for some constant $C$. We define
$$\bar{X}(t):=X(0)+\int_0^t[A_1(s,X(s))+A_2(s,X_s)]\d s+\int_0^tB(s,X_s)\d W(s),$$
in $V^*$.
By (\ref{eq5}) we obtain that for every $l\in\mathbb{X}$, $t\in[0,T]$
$${ }_{\mathbb{X}^*}\langle \bar{X}(t),l\rangle_{\mathbb{X}}-\int_0^t{ }_{\mathbb{X}^*}\langle A_1(s,X(s))+A_2(s,X_s),l\rangle_{\mathbb{X}} \d s
=\langle X(0),l\rangle_\mathbb{H}+\langle \int_0^tB(s,X_s)\d W(s),l\rangle_\mathbb{H},$$
which implies that
$$X(t)=\bar{X}(t) \quad \forall t\in [0,\infty), P-a.s..$$
Moreover, by (\ref{bd8}) and Theorem 2.2 we obtain
$$\E^P\int_0^T\|X(s)\|_V^p\d s\leq C,$$
which combined with (4.2), (\ref{bd7}) and [PR07, Theorem 4.2.5] implies that
$X\in C([-h,\infty),H_0)$ and
$$\E^P[\sup_{t\in[0,T]}\|X(t)\|_{H_0}^2]\leq C.$$$\hfill\Box$

\begin{defn}  We say that there exists a (probabilistically) strong solution to (2.1)  if for every probability space $(\Omega,\mathcal{F},(\mathcal{F}_t)_{t\in [0,T]},P)$ with an $(\mathcal{F}_t)$-Wiener process $W$ and given an initial value $\psi\in L^p(-h,0;V)\cap C([-h,0];H_0)\cap \mathfrak{C} $ there exists  an $(\mathcal{F}_t)$-adapted process $X:[0,T]\times \Omega\rightarrow H_0$ such that
 for  $P$-a.e. $\omega\in \varOmega$,
$$X(\cdot,\omega)\in \mathbb{S}_1\cap C([-h,\infty);H_0),$$  and such that $P$-a.s.
$$X(t)=\psi(t), -h\leq t\leq 0,$$
$$X(t)
= X(0)+\int_0^t[ A_1(s,X(s))+A_2(s,X_s)] \d s+\int_0^tB(s,X_s)\d W(s),$$
holds in $V^*$ for all $t\in [0,\infty)$.
\end{defn}

\beg{thm}\label{T4}
 Suppose that $(H1)-(H5)$ hold.
    Then for every $\psi\in   C([-h,0];  H_0)\cap \mathfrak{C}$ with $\mathcal{N}(\psi)<\infty$,
    $(\ref{SEE})$
    has a unique (probabilistically) strong solution $X\in C([-h,\infty);H_0)$ satisfying for every  $T>0$ and $q\in\mathbb{N}$
\begin{equation}\label{bd6}\E^P\left(\sup_{t\in[-h,T]}\|X(t)\|_\mathbb{H}^{2q}+[\int_0^T\mathcal{N}_1(X(t))\d t]^q \right)   < \infty.\end{equation} Moreover,
$$\E^P\left(\sup_{t\in[-h,T]}\|X(t)\|_{H_0}^{2}\right)<\infty.$$

\end{thm}

\proof Suppose $X,Y$ are the solutions of (\ref{SEE}), both with initial conditions
$\psi$ respectively, i.e.
 \begin{equation}\begin{split}
 X(t)&=\psi(0)+\int_0^t [A_1(s,X(s))+A_2(s,X_s)] \d s+\int_0^t B(s,X_s) \d W(s), \ t\in[0,T];\\
  Y(t)&=\psi(0)+\int_0^t [A_1(s,Y(s))+A_2(s,Y_s)] \d s+\int_0^t B(s,Y_s) \d W(s), \ t\in[0,T].
\end{split}
\end{equation}
Then by  It\^{o}'s formula  we have
 \begin{equation*}\begin{split}
 & \|X(t)-Y(t)\|_{H_0}^2\\
\le & 2\int_0^t   { }_{V^*}\< A_1(s,X(s))+A_2(s,X_s)-A_1(s,Y(s))-A_2(s,Y_s),  X(s)-Y(s)\>_V\d s
\\&+2\int_0^t   \<X(s)-Y(s), (B(s,X_s)-B(s,Y_s))\d W(s)  \>_{H_0}\\&+\int_0^t\|B(s,X_s)-B(s,Y_s)\|_{L_2(U;{H_0})}^2\d s,\ t\in[0,T].
\end{split}
\end{equation*}
Define $$\tau_n:=\inf\{t\geq0| \int_{0}^t[\rho_1(s,Y_s)+\rho_1(s,X_s)]\d s>n\}\wedge T.$$
By (\ref{bd6}) and (H4) we have
$$\lim_{n\rightarrow\infty}\tau_n=T, \quad P-a.s..$$

By (H4) and the B-D-G inequality we have for every stopping time $\tau$
 \begin{equation*}\begin{split}
 & \E\sup_{t\in[0,\tau\wedge\tau_n]}\|X(t)-Y(t)\|_{H_0}^2\\
\le & C\E\int_{0}^{\tau\wedge\tau_n}   (\rho_1(s,X_s)+\rho_1(s,Y_s))\|X_s-Y_s\|_{C([-h,0];{H_0})}^2\d s
\\&+2\E\sup_{0\leq t\leq \tau\wedge\tau_n}\int_0^t   \<X(s)-Y(s), (B(s,X_s)-B(s,Y_s))\d W(s)  \>_{H_0}\\
\le & C\E\int_{0}^{\tau\wedge\tau_n}   (\rho_1(s,X_s)+\rho_1(s,Y_s))\|X_s-Y_s\|_{C([-h,0];{H_0})}^2\d s+\frac{1}{2}\E\sup_{t\in[0,\tau\wedge\tau_n]}\|X(t)-Y(t)\|_{H_0}^2.
\end{split}
\end{equation*}

By [GM83 Lemma 2]
we have for every $n\in\mathbb{N}$
$$  X(t)=Y(t),  \ t\in[0,\tau_n],  \ \P-a.s.. $$
Letting $n\rightarrow\infty$ we have
$$  X(t)=Y(t),  \ t\in[0,\infty),  \ \P-a.s.. $$
Then by using the Yamada-Watanabe Theorem (see e.g. [Ku07], [RSZ08]) the results follow.

\qed

 \section{Application to examples}
In this section we describe some examples for (\ref{SEE}) satisfying conditions (H1)-(H5) imposed above. First, we recall some useful estimates which will be used later.

Let $O$ be a bounded open domain in $\mathbb{R}^d$ with smooth boundary and let $C^\infty_0(O)$  denote the set of all smooth functions from
$O$ to $\mathbb{R}$ with compact support. For $p > 1$, let $L
^p(O)$ be the  $L^p$
-space with  norm  denoted by $\|\cdot\|_{L^p}$. If $A$ is $-\Delta$ on the domain $O$ with Dirichlet boundary condition, then we have the following estimates which will be used later. For $s \geq0, p \in[1,\infty]$ we use $W_0^{s,p}(O)$  to denote the Sobolev space of all $f\in L^2$ for which $\|A^{s/2}f\|_{L^p}$
is finite.
\vskip.10in
\begin{rem}Here we can also consider the equation on the torus $\mathbb{T}^d$ and the results still can be applied in this case. \end{rem}

We shall also use the following standard Sobolev inequality (cf. [St70, Chapter V]):
\vskip.10in
\begin{lem} Suppose that $q>1, p\in [q,\infty)$ and
$$\frac{1}{p}+\frac{\sigma}{d}=\frac{1}{q}.$$
Suppose that $f\in W^{\sigma,q}_0$, then $f\in L^p$ and there is a constant $C\geq 0$ such that
$$\|f\|_{L^p}\leq C\| f\|_{W^{\sigma,q}_0}.$$
\end{lem}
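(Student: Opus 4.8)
The plan is to reduce the claim to the boundedness of a fractional integration operator and then invoke the Hardy--Littlewood--Sobolev inequality. Writing $g:=A^{\sigma/2}f\in L^q$, so that $f=A^{-\sigma/2}g$, it suffices to show that $A^{-\sigma/2}$ maps $L^q(O)$ boundedly into $L^p(O)$ whenever $\frac1p=\frac1q-\frac\sigma d$ with $1<q<p<\infty$ (the case $\sigma=0$, $p=q$ being trivial); the estimate $\|f\|_{L^p}\le C\|A^{\sigma/2}f\|_{L^q}=C\|f\|_{W^{\sigma,q}_0}$ then follows at once. I would first record that the scaling hypothesis $\frac1p+\frac\sigma d=\frac1q$ forces $0<\sigma<d$ in the nontrivial range, which is exactly the homogeneity degree of the Riesz potential appearing below.

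To control $A^{-\sigma/2}$ I would use the subordination formula
$$A^{-\sigma/2}=\frac{1}{\Gamma(\sigma/2)}\int_0^\infty t^{\sigma/2-1}\e^{-tA}\,\d t,$$
expressing the fractional power through the Dirichlet heat semigroup $\e^{-tA}$. Denote by $p_t^D(x,y)$ the kernel of $\e^{-tA}$ and by $p_t(x,y)=(4\pi t)^{-d/2}\e^{-|x-y|^2/(4t)}$ the free Gaussian kernel on $\R^d$. The maximum principle, equivalently domain monotonicity of the Dirichlet heat kernel, gives $0\le p_t^D(x,y)\le p_t(x,y)$ for all $x,y\in O$ and $t>0$. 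Substituting this bound into the subordination formula and evaluating the resulting $t$-integral (by the substitution $u=|x-y|^2/(4t)$, which produces a $\Gamma$-factor) yields the pointwise domination of the kernel of $A^{-\sigma/2}$ by the Riesz kernel,
$$0\le \frac{1}{\Gamma(\sigma/2)}\int_0^\infty t^{\sigma/2-1}p_t^D(x,y)\,\d t\le c_{d,\sigma}\,|x-y|^{\sigma-d},$$
so that $|A^{-\sigma/2}g(x)|\le c_{d,\sigma}\,(I_\sigma|g|)(x)$, where $I_\sigma$ is the Riesz potential of order $\sigma$ on $\R^d$ and $|g|$ is extended by zero outside $O$.

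It then remains to apply the Hardy--Littlewood--Sobolev inequality, which asserts that $I_\sigma:L^q(\R^d)\to L^p(\R^d)$ is bounded precisely under the relation $\frac1p=\frac1q-\frac\sigma d$ with $1<q<p<\infty$. Restricting back to $O$ and using that extension by zero does not increase the $L^q$-norm, I obtain $\|A^{-\sigma/2}g\|_{L^p(O)}\le C\|g\|_{L^q(O)}$, which is the assertion.

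The main obstacle is the harmonic-analytic core, namely establishing (or citing from [St70, Chapter V]) the Hardy--Littlewood--Sobolev inequality together with the Gaussian upper bound for the Dirichlet heat kernel; once these two ingredients are in place the scaling condition $\frac1p+\frac\sigma d=\frac1q$ is exactly the homogeneity needed for the Riesz-potential estimate, and the hypothesis $p<\infty$ rules out the borderline case where the embedding into $L^\infty$ or into a Hölder class would be required. The passage from the $\R^d$ estimate to the bounded domain $O$ is harmless precisely because the Dirichlet kernel is dominated pointwise by the free one.
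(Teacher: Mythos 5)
Your proof is correct, but it is worth pointing out that the paper itself contains no proof of this lemma at all: it is stated as a ``standard Sobolev inequality'' with a bare citation to [St70, Chapter V], where the embedding is proved for Riesz potentials on $\R^d$ via the Hardy--Littlewood--Sobolev inequality. Your argument follows essentially the same mathematical route that lies behind that citation (fractional integration plus HLS), but it supplies the one step the citation does not literally cover: the space $W^{\sigma,q}_0(O)$ in this paper is defined through fractional powers of the Dirichlet Laplacian $A$ on the bounded domain $O$, not through Riesz or Bessel potentials on $\R^d$, and your heat-kernel subordination step --- writing $A^{-\sigma/2}$ through the Dirichlet semigroup, dominating the Dirichlet heat kernel by the free Gaussian kernel via the maximum principle, and integrating out the time variable to produce the Riesz kernel $c_{d,\sigma}|x-y|^{\sigma-d}$ --- is exactly what reduces the statement on $O$ to Stein's full-space theorem. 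Your scaling bookkeeping is also right: for $p>q$ the relation $\frac1p+\frac\sigma d=\frac1q$ with $q>1$ and $p<\infty$ forces $0<\sigma<d/q<d$, so both the Gamma-integral in the kernel computation and the hypotheses of HLS are satisfied, while the case $p=q$ (i.e.\ $\sigma=0$) is trivial. In short, your approach buys a self-contained justification valid for the Dirichlet fractional Sobolev spaces actually used in the paper, at the cost of invoking two standard inputs (Gaussian domination of the Dirichlet heat kernel and the Hardy--Littlewood--Sobolev inequality), whereas the paper simply outsources the entire statement to the literature.
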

\vskip.10in

In the following, let $\{W^k(t);t\geq0, k\in\mathbb{N}\}$ be a sequence of independent standard Brownian motions and $l^2$ be the Hilbert space of all square summable sequences of real number sequences. We will use $\partial_i:=\partial_{\xi_i}, \partial_{ij}^2:=\partial^2_{\xi_i\xi_j}$ and the usual Einstein summation convention.

\subsection{Fractional Navier-Stokes equation}

Now we apply Theorem \ref{T1} to the stochastic d-dimensional fractional Navier-Stokes equation in  a bounded domain $O$ in $\mathbb{R}^d$ with smooth boundary:
\begin{equation}\label{NS}\aligned du(t)=&[-(-\Delta)^\alpha u(t)-u(t)\cdot\nabla u(t)+\nabla p(t)\\&+c_1u(t-r_1(t))\cdot \nabla u(t)+c_2\int_{-h}^0u(t+r)\cdot \nabla u(t)dr\\&+\int_{-h}^0f_1(r,\xi,u(t+r))dr+f(\xi,u(t-r_2(t)))]dt\\&+[\nabla \tilde{p}_i(t)+g_i(\xi,u(t-r_3(t)))]dW^i(t),\endaligned\end{equation}
$$\rm{div}u(t)=0,$$
$$u(t,\xi)=0, (t,\xi)\in[-h,\infty)\times \partial O,$$
$$u(t)=\psi(t), -h\leq t\leq 0,$$
where $p(t,\xi)$ and $\tilde{p}_i(t,\xi), i\in\mathbb{N},$ are unknown scalar functions, $u$ is the velocity vector, $c_i\in\mathbb{R}, i=1,2$,  $r_i, i=1,2,3,$  are continuous differentiable  functions on $\mathbb{R}$ with $0\leq r_i(t)<h, i=1,2,3, \sup r_i'(t)<1, i=1,2$,  $f$ and $g$ are functions from $O\times \mathbb{R}^d$ to $\mathbb{R}^d$ and $\mathbb{R}^d\times l^2$, respectively, continuous with respect to the second variable, $f_1$ is a function from $[-h,0]\times O\times \mathbb{R}^d$ to $\mathbb{R}^d$, continuous with respect to the third variable,  satisfying for some $\kappa_0>0$ and $q\in L^2(O)$
$$|f_1(r,\xi,u)|+|f(\xi,u)|+\|g(\xi,u)\|_{l^2}\leq \kappa_0 \cdot|u|+q(\xi), \quad \forall (r,\xi,u)\in [-h,0]\times O\times \mathbb{R}^d.$$

When $\alpha=1,g=c_1=c_2=f=f_1=0$, (5.1) reduces to the usual Navier-Stokes equation. In the  case without delay for general $\alpha$ this equation has been studied by many authors (see [W05], [Z12], [RZZ14b] and the references therein). In [W05], the author  obtains global existence and uniqueness of solutions for small initial values when $\alpha>\frac{1}{2}$. In [Z12], the author obtains  local existence and uniqueness of solutions by using a stochastic Lagrangian particle
trajectories approach for $\alpha=\frac{1}{2}$ and global existence and uniqueness when $d=2$. In [RZZ14b] we obtain  local existence and uniqueness of solutions of stochastic fractional Navier-Stokes equations for $\alpha\in(0,1]$ and every $d\in\mathbb{N}$.

Let $C_{0,\sigma}^\infty(O)^d$ be the space of all smooth $d$-dimensional divergence free vector fields on $O$ with compact supports in $O$. For $s\geq0, p>1$, the completion of $C_{0,\sigma}^\infty(O)^d$ in $W_0^{s,p}(O)^d$ is denoted by $W_{0,\sigma}^{s,p}(O)$.
Below we choose
$$\mathbb{Y}=\mathbb{H}=W^{0,2}_{0,\sigma}(O), V=W^{\alpha,2}_{0,\sigma}(O)$$
and
$$\mathbb{X}^*=(W^{2+d,2}_{0,\sigma}(O))^*, \mathbb{X}=W^{2+d,2}_{0,\sigma}(O). $$
Let  $\mathcal{P}$ be the orthogonal projection operator from $L^2(O)^d$ onto $\mathbb{H}$. We define the operators $A_1, A_2$ and $B$ as follows: for $x\in C_{0,\sigma}^\infty(O)$
$$A_1(x):=-\mathcal{P}[(-\Delta)^\alpha x]-\mathcal{P}[(x\cdot\nabla)x],$$
and for $u\in C([-h,0];C_{0,\sigma}^\infty(O))$
$$\aligned A_2(t,u):=&c_1\mathcal{P}[u(-r_1(t))\cdot \nabla u(0)]+c_2\mathcal{P}[\int_{-h}^0u(r)\cdot \nabla u(0)\d r]\\&+\mathcal{P}[\int_{-h}^0f_1(r,u(r))\d r]
+\mathcal{P}f(u(-r_2(t))).\endaligned$$
$$B(t,u):=\mathcal{P}[g(u(-r_3(t)))].$$

Then by similar arguments as in [GRZ09] we have that for $x_1,y_1,x,y\in C_{0,\sigma}^\infty(O)$,
\begin{equation}\|\mathcal{P}(-\Delta)^\alpha x-\mathcal{P}(-\Delta)^\alpha y\|_{\mathbb{X}^*}\leq C\|x-y\|_\mathbb{H},\end{equation}
\begin{equation}\|\mathcal{P}[x_1\cdot\nabla x]-\mathcal{P}[y_1\cdot\nabla y]\|_{\mathbb{X}^*}\leq C\|x_1\|_\mathbb{H}\|x-y\|_\mathbb{H}+C\|y\|_\mathbb{H}\|x_1-y_1\|_\mathbb{H}.\end{equation}
We can extend the operators $A_1$ and $A_2, B$ to $\mathbb{H}$ and $\mathfrak{C}$ such that
for $x\in \mathbb{H}, u\in \mathfrak{C}$, $A_1(x)\in \mathbb{X}^*$ and $A_2(t,u)\in \mathbb{X}^*$, $B(t,u)\in L_2(l_2,\mathbb{H})$.
Thus, we can write the system in the following abstract form
$$\d u(t)=[A_1(t,u(t))+A_2(t,u_t)]\d t+B(t,u_t)\d W(t), \quad u(t)=\psi(t), -h\leq t\leq0.$$
Moreover, by (5.2), (5.3) we have that for $u\in \mathfrak{C}_\infty$
$$A_1(u(\cdot))\in L^1_{\rm{loc}}([0,\infty);\mathbb{X}^*), A_2(\cdot,u_\cdot)\in L^1_{\rm{loc}}([0,\infty);\mathbb{X}^*).$$
Let $e_n,n\in\mathbb{N},$ $\Pi_n, \mathbb{H}_n$ be as in Section 2. We can easily deduce that
$\|\Pi_nA_1(\cdot)\|_{\mathbb{X}^*}$ is locally bounded and continuous on $\mathbb{H}_n$ and
$\|\Pi_nA_2(\cdot,\cdot)\|_{\mathbb{X}^*}, \|\Pi_nB(\cdot,\cdot)\|_{L_2(U;\mathbb{H})}$ are  bounded on balls in $\mathbb{R}^+\times C([-h,0];\mathbb{H}_n)$. Furthermore, for every $t\in[0,T]$, we have for $u,v\in C([-h,\infty);\mathbb{X})$
$$\aligned&\|\mathcal{P}[u(t-r_1(t))\cdot\nabla u(t)]-\mathcal{P}[v(t-r_1(t))\cdot\nabla v(t)]\|_{\mathbb{X}^*}
\\\leq& C[\sup_{s\in [-h,T]}\|u(s)\|_{\mathbb{H}}+\sup_{s\in [-h,T]}\|v(s)\|_{\mathbb{H}}]\sup_{s\in [-h,T]}\|u(s)-v(s)\|_{\mathbb{H}},\endaligned$$
and
$$\aligned&\|\int_{-h}^0\mathcal{P}[u(t+r)\cdot\nabla u(s)]-\mathcal{P}[v(t+r)\cdot\nabla v(t)]\d r\|_{\mathbb{X}^*}\\\leq& C[\sup_{s\in [-h,T]}\|u(s)\|_{\mathbb{H}}+\sup_{s\in [-h,T]}\|v(s)\|_{\mathbb{H}}]\sup_{s\in [-h,T]}\|u(s)-v(s)\|_{\mathbb{H}},\endaligned$$
which implies that for every $t>0$, $\Pi_n A_2(t,\cdot), \Pi_n B(t,\cdot)$ are continuous on $C([-h,0];\mathbb{H}_n)$.

We define the functional $\mathcal{N}_1$ on $\mathbb{H}$ as follows:
$$\mathcal{N}_1(u)=\left\{\begin{array}{ll}\|(-\Delta)^\frac{\alpha}{2} u\|_{L^2(O)}^2&\ \ \ \ \textrm{ if } u\in W^{\alpha,2}_{0,\sigma}(O),\\0&\ \ \ \ \textrm{ otherwise} .\end{array}\right.$$
We easily deduce that $\mathcal{N}_1\in\mathfrak{U}^2$. In this case $\mathcal{N}(\psi)<\infty$ is equivalent to $\psi\in L^2([-h,0];V)$.

\beg{thm}\label{TN}
 For every $\psi\in   \mathfrak{C}\cap L^2_V$
    $(\ref{NS})$
    has a weak solution $X:[-h,\infty)\times \Omega\rightarrow \mathbb{H}$ such that for every $q\in\mathbb{N}$, $T>0$
$$\E^P\left(\sup_{t\in[-h,T]}\|X(t)\|_\mathbb{H}^{2q}+[\int_{-h}^T\mathcal{N}_1(X(t))dt]^q \right)   < \infty.$$

\end{thm}

\proof First we check (H3): by (5.2), (5.3) and the linear growth of  $f, f_1,g$ we have for $u\in \mathfrak{C}_\infty$, $t\geq0$,
$$\|A_1(u(t))\|_{\mathbb{X}^*}\leq C(1+\|u_t\|_\mathfrak{C}^2),$$
$$\aligned\|A_2(t,u_t)\|_{\mathbb{X}^*}\leq& C[\|u(t-r_1(t))\|_\mathbb{H}\|u(t)\|_\mathbb{H}+\int_{-h}^0\|u(t+r)\|_\mathbb{H}\|u(t)\|_\mathbb{H}\d r]\\&+C[1+\|u(t-r_2(t))\|_\mathbb{H}+\int_{-h}^0\|u(t+r)\|_\mathbb{H}\d r]
\\\leq& C(1+\|u_t\|_\mathfrak{C}^2),\endaligned$$
$$\|B(t,u_t)\|^2_{L_2(l^2;\mathbb{H})}\leq  C(1+\|u_t\|_\mathfrak{C}^2),$$
where $C$ is a constant changing from line to line. Now (H3) follows immediately by integrating with respect to $t$.

Now we check (H1):  by  Remark 2.2, (H3), (5.2) and (5.3) it follows that (\ref{cd1}) holds in this case. Now by (5.3) we have for $u,v\in L^2_{\rm{loc}}([-h,\infty);\mathbb{H})$ and $t>0$
$$\aligned&\int_0^t\|\mathcal{P}[u(s-r_1(s))\cdot\nabla u(s)]-\mathcal{P}[v(s-r_1(s))\cdot\nabla v(s)]\|_{\mathbb{X}^*}\d s\\\leq&C(\int_0^t\|u(s)\|_\mathbb{H}^2\d s)^{\frac{1}{2}}(\int_0^t\|u(s-r_1(s))-v(s-r_1(s))\|_{\mathbb{H}}^2\d s)^{\frac{1}{2}}\\&+(\int_0^t\|v(s-r_1(s))\|_\mathbb{H}^2\d s)^{\frac{1}{2}}(\int_0^t\|u(s)-v(s)\|_{\mathbb{H}}^2\d s)^{\frac{1}{2}}\\\leq&C\left((\int_{-h}^t\|u(s)\|_\mathbb{H}^2\d s)^{\frac{1}{2}}+(\int_{-h}^t\|v(s)\|_{\mathbb{H}}^2\d s)^{\frac{1}{2}}\right)(\int_{-h}^t\|u(s)-v(s)\|_{\mathbb{H}}^2\d s)^{\frac{1}{2}}, \endaligned$$
where in the last inequality we used $\sup_t r_i'(t)<1$.
Similarly, for $u,v\in L^2_{\rm{loc}}([-h,\infty);\mathbb{H})$ and $t>0$
$$\aligned&\int_0^t\|\int_{-h}^0\left(\mathcal{P}[u(s+r)\cdot\nabla u(s)]-\mathcal{P}[v(s+r)\cdot\nabla v(s)]\right)\d r\|_{\mathbb{X}^*}\d s\\\leq&\int_0^t\int_{-h}^0\|\mathcal{P}[u(s+r)\cdot\nabla u(s)]-\mathcal{P}[v(s+r)\cdot\nabla v(s)]\|_{\mathbb{X}^*}\d r\d s\\\leq&C\left((\int_{-h}^t\|u(s)\|_\mathbb{H}^2\d s)^{\frac{1}{2}}+(\int_{-h}^t\|v(s)\|_{\mathbb{H}}^2\d s)^{\frac{1}{2}}\right)(\int_{-h}^t\|u(s)-v(s)\|_{\mathbb{H}}^2\d s)^{\frac{1}{2}}. \endaligned$$
Let $u_n$ converge to $u$ in $L^2_{\rm{loc}}([-h,\infty),\mathbb{H})$. By Lebesgue's dominated convergence theorem  we have
$$\aligned&\int_0^t\|\mathcal{P}[\int_{-h}^0f_1(r,\cdot,u_n(s+r))\d r]
-\mathcal{P}[\int_{-h}^0f_1(r,\cdot,u(s+r))\d r]
\|_{\mathbb{X}^*}\d s\\\leq&C\int_0^t\int_{-h}^0\|f_1(r,\cdot,u_n(s+r))
-f_1(r,\cdot,u(s+r))\|_{L^1(O)}\d r
\d s\rightarrow0, n\rightarrow\infty.
\endaligned$$
Similarly, the convergence for $f$  can be obtained.
Then the above estimates imply that (\ref{cd2}) in (H1) holds in this case. The convergence for $g$ can be obtained similarly.

For (H2),
since $${ }_{\mathbb{X}^*}\langle \mathcal{P}[y\cdot\nabla x],x\rangle_{\mathbb{X}}=\langle y\cdot\nabla x,x\rangle_\mathbb{H}=0, x,y\in\mathbb{X},$$
we have that  for $u\in C([-h,\infty);\mathbb{X})$,
$$\aligned{ }_{\mathbb{X}^*}\langle A_1(u(t))+A_2(u_t),u(t)\rangle_{\mathbb{X}}=&-\mathcal{N}_1(u(t))
+\int_{-h}^0\langle f_1(r,\cdot,u(t+r)),u(t)\rangle_\mathbb{H} \d r\\&+\langle f(\cdot,u(t-r_2(t))),u(t)\rangle_\mathbb{H}\\\leq&-\mathcal{N}_1(u(t))+C(1+\|u_t\|_\mathfrak{C}^2) .\endaligned$$
Now (H2) follows  by taking integration w.r.t. $t$.

By Theorem \ref{T2} the results follow.\qed

\vskip.10in
\noindent\textbf{Stochastic 2-D Navier-Stokes equation}

Now we come to the case $d=2$, $\alpha=1$.  Define
$$H_0=\mathbb{H},\quad  V= W_{0,\sigma}^{1,2}(O)
$$

 It is standard that  using the Gelfand triple
$$     V\subseteq H_0\equiv H_0^*\subseteq V^*,   $$
we see that the following mappings
$$ A_1: V\rightarrow V^*, \  A_2: \mathbb{R}^+\times (L^2_V\cap \mathfrak{C})\rightarrow V^*  $$
are well defined. For $ u \in \mathbb{S}_1$, we easily deduce that
$A_1(\cdot, u (\cdot)), A_2(\cdot, u _\cdot)\in L^1_{\rm{loc}}([0,\infty);V^*), B(\cdot, u _\cdot)\in L^2_{\rm{loc}}([0,\infty);L_2(l_2;H_0))$.

Suppose that  $f_1, f, g$ satisfy
\begin{equation}|f_1(r,\xi,u_1)-f_1(r,\xi,u_2)|+|f(\xi,u_1)-f(\xi,u_2)|\leq C(1+|u_1|^\gamma+|u_2|^\gamma)|u_1-u_2|, 1\leq \gamma\leq 2\end{equation}
\begin{equation}\|g(\xi,u_1)-g(\xi,u_2)\|_{l^2}\leq C|u_1-u_2|.\end{equation}

\begin{thm} Fix $d=2, \alpha=1$. Suppose that $f_1, f$ and $g$ satisfy (5.4), (5.5) respectively. Then for every $\psi\in L^2(-h,0;V)\cap  C([-h,0];  H_0)$,
    $(\ref{NS})$
    has a unique (probabilistically) strong solution $X\in C([-h,\infty),H_0)$ satisfying for every $T>0$
and $q\in\mathbb{N}$
$$\E^P\left(\sup_{t\in[-h,T]}\|X(t)\|_\mathbb{H}^{2q}+[\int_0^T\mathcal{N}_1(X(t))\d t]^q \right)   < \infty.$$

\end{thm}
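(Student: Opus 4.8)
The plan is to deduce the assertion from Theorem~\ref{T4} by checking that, in the present two-dimensional situation, all of the conditions (H1)--(H5) are in force. Since the coefficients still obey the linear growth bound of Section~5.1, conditions (H1)--(H3) hold exactly as established in the proof of Theorem~\ref{TN} (specialized to $\alpha=1$, $d=2$), where $\mathcal{N}_1(\cdot)\asymp\|\cdot\|_V^2\in\mathfrak{U}^2$; the additional hypotheses (5.4), (5.5) enter only through the local monotonicity (H4). Note also that $\psi\in L^2(-h,0;V)\cap C([-h,0];H_0)$ is precisely $\psi\in\mathfrak{C}\cap C([-h,0];H_0)$ with $\mathcal{N}(\psi)<\infty$, as required by Theorem~\ref{T4}. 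Hence it remains to verify (H5) and (H4), after which pathwise uniqueness, and thus the existence of a unique probabilistically strong solution $X\in C([-h,\infty);H_0)$ with the stated moments, follows from Theorem~\ref{T4}.

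For (H5) I would use the two-dimensional Ladyzhenskaya inequality $\|v\|_{L^4}^2\lesssim\|v\|_\mathbb{H}\|v\|_V$ (a special case of the Sobolev inequality recalled in Section~5). The Stokes part satisfies $\|\mathcal{P}(-\Delta)u\|_{V^*}\lesssim\|u\|_V$, hence $\|\mathcal{P}(-\Delta)u\|_{V^*}^2\lesssim\mathcal{N}_1(u)$, while from ${ }_{V^*}\<\mathcal{P}[(u\cdot\nabla)u],\phi\>_V=-\<(u\cdot\nabla)\phi,u\>_{H_0}$ and the $L^4$-bound one gets $\|\mathcal{P}[(u\cdot\nabla)u]\|_{V^*}\lesssim\|u\|_\mathbb{H}\|u\|_V$, so that
$$\int_0^t\|\mathcal{P}[(u\cdot\nabla)u]\|_{V^*}^2\d s\lesssim\sup_{s\in[0,t]}\|u(s)\|_\mathbb{H}^4+\Big[\int_0^t\mathcal{N}_1(u(s))\d s\Big]^2.$$
The delay convective ($c_1,c_2$) terms are treated identically, the extra delayed factor being reabsorbed after the substitution $\tau=s-r_i(s)$ using $\sup_s r_i'(s)<1$; the source and noise terms are controlled by their linear growth together with the continuous embedding $H_0\subset V^*$. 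Collecting these and integrating yields both inequalities in (H5) with $q=2$, and the bound on $\|B(t,u_t)\|_{L_2(U;H_0)}^2$ is just the linear growth of $g$.

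The heart of the matter is (H4). Writing $w:=u-\eta$, the only good quantity is the viscous contribution $-2\int_0^t\mathcal{N}_1(w(s))\d s$, which must absorb everything else. For the principal nonlinearity I would exploit the antisymmetry $\<(u\cdot\nabla)w,w\>_{H_0}=0$ to reduce the difference to $\<(w\cdot\nabla)\eta,w\>_{H_0}$ and then estimate, in dimension two, $|\<(w\cdot\nabla)\eta,w\>_{H_0}|\lesssim\|w\|_{L^4}^2\|\eta\|_V\lesssim\|w\|_\mathbb{H}\|w\|_V\|\eta\|_V\leq\tfrac14\mathcal{N}_1(w)+C\|\eta\|_V^2\|w\|_\mathbb{H}^2$, the last term feeding $\rho_1(s,\eta_s)$ since $\|w(s)\|_\mathbb{H}\leq\|u_s-\eta_s\|_{C([-h,0];H_0)}$ (recall $H_0=\mathbb{H}$). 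For each delay convective term I would again cancel the divergence-free transport part, estimate the remainder by the $L^4$-inequality and apply Young so as to split off a factor $\|w(s-r_i(s))\|_V^2$; the substitution $\tau=s-r_i(s)$, legitimate because $\sup_s r_i'(s)<1$, then gives $\int_0^t\|w(s-r_i(s))\|_V^2\d s\lesssim\int_{-h}^t\mathcal{N}_1(w(\tau))\d\tau$, whose part on $[0,t]$ is absorbed into the viscous term and whose part on $[-h,0]$ is exactly the $c_2(t)\int_{-h}^0\mathcal{N}_1(u-\eta)\d s$ allowed in (H4). The source terms $f_1,f$ are handled through (5.4) and two-dimensional Gagliardo--Nirenberg interpolation $\|w\|_{L^p}\lesssim\|w\|_\mathbb{H}^{1-\theta}\|w\|_V^\theta$, where the restriction $1\leq\gamma\leq2$ guarantees that every power of $\|w\|_V$ produced is strictly below $2$, hence absorbable into $\mathcal{N}_1(w)$, while the complementary powers of $\|w\|_\mathbb{H}$ assemble into $\|u_s-\eta_s\|_{C([-h,0];H_0)}^2$; the noise difference is immediate from the Lipschitz bound (5.5). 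One then checks that the resulting $\rho_1(s,\cdot)\asymp\|\cdot(s)\|_V^2+\|\cdot(s)\|_{L^4}^4+C$ obeys $\int_0^t\rho_1(s,\eta_s)\d s\lesssim[\int_{-h}^t\mathcal{N}_1(\eta)\d s]^q+(1+\sup_{[-h,t]}\|\eta\|_\mathbb{H}^{2q})$.

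The main obstacle is precisely this verification of (H4) for the delay convective and source terms: unlike in the undelayed case, the negative viscous term lives at the current time $s$ whereas the troublesome factors carry the shifted argument $s-r_i(s)$, so the estimate cannot close pointwise and one must pass through the time-substitution $\tau=s-r_i(s)$ to realign the $V$-norms, keeping all remaining powers of $\|w\|_V$ below $2$ (which is what forces $\gamma\leq2$ in two dimensions). Once (H1)--(H5) are established, applying (H4) to two solutions $X,Y$ sharing the initial datum $\psi$ makes $\int_{-h}^0\mathcal{N}_1(X-Y)\d s$ vanish, and Theorem~\ref{T4} delivers the unique probabilistically strong solution in $C([-h,\infty);H_0)$ together with the asserted moment bounds.
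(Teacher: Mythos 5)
Your proposal is correct and follows essentially the same route as the paper: (H1)--(H3) are inherited from the proof of Theorem~\ref{TN}, and (H4), (H5) are verified via the two-dimensional $L^4$-interpolation (Ladyzhenskaya) inequality, the divergence-free cancellation, Young's inequality, and the time substitution $\tau=s-r_i(s)$ justified by $\sup_s r_i'(s)<1$, after which Theorem~\ref{T4} yields existence, pathwise uniqueness and the moment bounds. The only cosmetic difference is your decomposition of the convective difference (placing the $V$-norm on $\eta$ via $\<(w\cdot\nabla)\eta,w\>_{H_0}$ rather than on $u$ as the paper does), which is immaterial since (H4) is symmetric in the two paths.
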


\begin{proof}
Now we check (H4): we have for $u,v\in L^2_{\rm{loc}}([-h,\infty);V)\cap C([-h,\infty);\mathbb{H})$
$$\aligned&|\int_0^T{ }_{V^*}\langle u(s-r_1(s))\cdot \nabla u(s)-v(s-r_1(s))\cdot \nabla v(s), u(s)-v(s)\rangle_{V}\d s|\\\leq&C\int_0^T\|\nabla u(s)\|_\mathbb{H}\|u(s-r_1(s))-v(s-r_1(s))\|_{L^4}\|u(s)-v(s)\|_{L^4}
\d s\\\leq&C\int_0^T\|\nabla u(s)\|_\mathbb{H}\|u(s-r_1(s))-v(s-r_1(s))\|_{V}^{1/2}\|u(s-r_1(s))-v(s-r_1(s))\|_{\mathbb{H}}^{1/2}
\\&\|u(s)-v(s)\|_{V}^{1/2}\|u(s)-v(s)\|_{\mathbb{H}}^{1/2}
\d s\\\leq&C\int_0^T\|\nabla u(s)\|_\mathbb{H}^2\|u_s-v_s\|_{\mathfrak{C}}^2\d s+\varepsilon[\int_0^T\|u(s)-v(s)\|_{V}^2\d s]^{1/2}
[\int_0^T\|u(s-r_1(s))-v(s-r_1(s))\|_{V}^2\d s]^{1/2}
\\\leq&C\int_0^T\|\nabla u(s)\|_\mathbb{H}^2\|u_s-v_s\|_{\mathfrak{C}}^2\d s+\varepsilon\int_{-h}^T\|\nabla u(s)-\nabla v(s)\|_{\mathbb{H}}^2\d s,\endaligned$$
where we used $div v=0$ in the first inequality, Lemma 5.1 and the interpolation inequality in the second inequality, Young's inequality in the third inequality.

For $u(s)\cdot \nabla u(s)$ and $\int_{-h}^0u(s+r)\cdot \nabla u(s)\d r$ we have similar results.

Now for $f$ we have for $u,v\in L^2_{\rm{loc}}([-h,\infty);V)\cap C([-h,\infty),\mathbb{H})$
$$\aligned&|\int_0^T{ }_{V^*}\langle f(u(s-r_2(s)))-f(v(s-r_2(s))), u(s)-v(s)\rangle_{V}\d s|\\\leq&C\int_0^T\int[|u(s-r_2(s))|^{\gamma}+|v(s-r_2(s))|^{\gamma}+1]|u(s-r_2(s))-v(s-r_2(s))||u(s)-v(s)|\d \xi\d s
\\\leq&C\int_0^T[1+\|u(s-r_2(s))\|_{L^{\gamma p_1}}^{\gamma}+\|v(s-r_2(s))\|_{L^{\gamma p_1}}^{\gamma}]
\|u(s-r_2(s))-v(s-r_2(s))\|_{L^{2q_1}}\\&\|u(s)-v(s)\|_{L^{2q_1}}\d s
\\\leq&C\int_0^T[1+\|u(s-r_2(s))\|_{L^{\gamma p_1}}^{\gamma }+\|v(s-r_2(s))\|_{L^{\gamma p_1}}^{\gamma }]\|u(s-r_2(s))-v(s-r_2(s))\|_{V}^{\sigma}
\|u(s)-v(s)\|_{V}^{\sigma}\\&
\|u(s-r_2(s))-v(s-r_2(s))\|_{\mathbb{H}}^{1-\sigma}\|u(s)-v(s)\|_{\mathbb{H}}^{1-\sigma}\d s
\\\leq&C\int_0^T[1+\|u(s-r_2(s))\|_{V}^{\sigma_0\gamma \frac{1}{1-\sigma}}
\|u(s-r_2(s))\|_{\mathbb{H}}^{(1-\sigma_0)\gamma \frac{1}{1-\sigma}}\\&+\|v(s-r_2(s))\|_{V}^{\sigma_0\gamma\frac{1}{1-\sigma}}
\|v(s-r_2(s))\|_{\mathbb{H}}^{(1-\sigma_0)\gamma \frac{1}{1-\sigma}}]
\|u_s-v_s\|_{\mathfrak{C}}^{2}\d s\\&+\varepsilon\int_{-h}^T\|u(s)-v(s)\|_{V}^{2}\d s,\endaligned$$
where $\frac{1}{p_1}+\frac{1}{q_1}=1, \sigma=1-\frac{1}{q_1}<1, \sigma_0=1-\frac{2}{\gamma p_1}<1, \frac{\sigma_0\gamma}{1-\sigma}\leq 2$ and we used (5.4) in the first inequality, Lemma 5.1 and the interpolation inequality in the third inequality.
The term $\int_{-h}^0f_1(r,u(t+r))\d r$ can be treated similarly.
Since $g$ satisfies Lipschitz conditions, we easily deduce that
$$\|B(t,u_t)-B(t,v_t)\|_{L_2(l_2;H_0)}^2\leq C\|u_t-v_t\|_\mathfrak{C}^2.$$
Now (H4) follows.

Now for (H5), we have for $u\in  L^2_{\rm{loc}}([-h,\infty);V)\cap L^\infty_{\rm{loc}}([-h,\infty);\mathbb{H})\cap C([-h,\infty);\mathbb{X}^*)$
$$\aligned&\int_0^T\|u(s-r_1(s))\cdot \nabla u(s)\|_{V^*}^2\d s\leq \int_0^T\|u(s-r_1(s))\|_{L^4}^2\|u(s)\|_{L^4}^2\d s\\\leq&C\int_0^T\|u(s-r_1(s))\|_{V}\|u(s-r_1(s))\|_{\mathbb{H}}\|u(s)\|_{V}\|u(s)\|_\mathbb{H}\d s\\\leq&C[\int_0^T\|u(s-r_1(s))\|_{V}^2\|u(s-r_1(s))\|^2_{\mathbb{H}}\d s]^{1/2}[\int_0^T\|u(s)\|_{V}^2\|u(s)\|_\mathbb{H}^2\d s]^{1/2}\\\leq&C\int_{-h}^T\|u(s)\|_{V}^2\|u(s)\|_\mathbb{H}^2\d s\\\leq&C[\int_{-h}^T\|u(s)\|_{V}^2\d s]^2+C\sup_{s\in[-h,T]}\|u(s)\|_\mathbb{H}^4.\endaligned$$
The other terms can be checked easily since $f, f_1, g$ satisfy linear growth conditions.
Therefore,  existence and uniqueness of solutions
follow from Theorem 4.2.
\end{proof}

\begin{rem}The existence results in Section 2 can also be applied to other stochastic equations with delays from fluid dynamics such as the stochastic quasi-geostrophic equation with delays, which  has been studied in [RZZ14] without delays. However, to prove the uniqueness of the solutions, we need $L^p$-estimates for the solutions, which cannot be obtained by Theorem 4.2. This will be studied in the future. \end{rem}

\subsection{Stochastic semilinear equation}

Consider the following stochastic semilinear equation in a smooth domain $O\subset \mathbb{R}^d$:
\begin{equation}\label{Sem}\aligned dX(t)=&\Delta [X(t)+a_1(X(t-r_0(t)))+\int_{-h}^0a_2(r,X(t+r))\d r]\d t\\&+(f_1(X(t))+f_2(X(t-r_1(t)))+\int_{-h}^0f_3(r,X(t+r))\d r)\\&\cdot[\nabla X(t)+\nabla X(t-r_2(t))+\int_{-h}^0\nabla X(t+r)\d r]\d t\\
&+[\partial_i b_1^i(X(t))+\partial_i b_2^i(X(t-r_3(t)))+\int_{-h}^0\partial_i b_3^i(r,X(t+r))\d r]\d t\\
&+[g_1(X(t))+g_2(X(t-r_4(t)))+\int_{-h}^0g_3(r,X(t+r))\d r]\d t\\&+c_i(X(t-r_5(t)))dW^i(t),\endaligned\end{equation}
$$X(t,\xi)=0, (t,\xi)\in [-h,\infty)\times \partial O,$$
$$X(t)=\psi(t), -h\leq t\leq0,$$
where
$r_i, i=0,1,...,5,$  are continuous differentiable  functions on $\mathbb{R}$ with $0\leq r_i(t)<h, i=0,1,...,5,  \sup r_i'(t)<1, i=0,1,2,3,4$,  $a_1, f_i, b_i,  g_i, i=1,2, $ and $c$ are functions from $O\times \mathbb{R}$ to $\mathbb{R}$, $\mathbb{R}^d$, $\mathbb{R}^d$, $\mathbb{R}$ and $\mathbb{R}\times l^2$, respectively,  continuous with respect to the second variable, $a_2, f_3, b_3,  g_3$ are functions from $[-h,0]\times O\times \mathbb{R}$ to $\mathbb{R}$, $\mathbb{R}^d$, $\mathbb{R}^d$, $\mathbb{R}$, continuous with respect to the third variable,  satisfying
\begin{equation}  |g_1(\xi,u)|\leq  C[|u|^{\gamma_1}+1],\quad g_1(\xi, u)u\leq C|u|^2, (\xi,u)\in O\times\mathbb{R}, \end{equation}
for $2\leq \gamma_1<2+\frac{4}{d}$, and for some $\kappa_0>0$ and $q\in L^2(O)$
\begin{equation}\aligned&\|b_3(r,\xi,u)\|_{\mathbb{R}^d}+\|b_1(\xi,u)\|_{\mathbb{R}^d}+\|b_2(\xi,u)\|_{\mathbb{R}^d}+\|c(\xi,u)\|_{l^2}\\&+|g_2(\xi,u)|+|g_3(r,\xi,u)|\\\leq& \kappa_0 \cdot|u|+q(\xi), \quad \forall (r,\xi,u)\in [-h,0]\times O\times \mathbb{R},\endaligned\end{equation}
\begin{equation}\sup_{s}\frac{1}{1-r_0'(s)}|\partial_{u}a_1(\xi,u)|+h|\partial_{u}a_2(r,\xi,u)|\leq \frac{1}{4},\quad \forall (r,\xi,u)\in [-h,0]\times O\times \mathbb{R},\end{equation}
\begin{equation}\|\partial_{\xi}a_1(\xi,u)\|_{\mathbb{R}^{d}}+\|\partial_{\xi}a_2(r,\xi,u)\|_{\mathbb{R}^{d}}\leq C(1+|u|),\quad \forall (r,\xi,u)\in [-h,0]\times O\times \mathbb{R}.\end{equation}
Moreover, $f_i, i=1,2,$ and  $f_3$ are bounded and Lipschitz continuous with respect to the second variable and the third variable respectively.

When $f_i, a_i, b_i=0, i=1,2, 3$,  then the above equation is the stochastic reaction-diffusion
equation which has also attracted a lot of attention (see e.g. [DPZ92], [RZZ14a] and the references therein).

Below we choose
$$\mathbb{Y}=\mathbb{H}=L^2(O), \quad V=W^{1,2}_{0}(O),$$
and
$$\mathbb{X}^*=(W^{2+d,2}_{0}(O))^*,\quad \mathbb{X}=W^{2+d,2}_{0}(O). $$
 We define the operators $A$ and $B$ as follows: for $x\in C_{0}^\infty(O)$
$$A_1(x):=\Delta x+\partial_i b_1^i(x)+g_1(x),$$
and for $u\in C([-h,0];C_{0}^\infty(O))$
$$\aligned A_2(t, u):=&\Delta[a_1(u(-r_0(t)))+\int_{-h}^0a_2(r,u(r))\d r]+[f_1(u(0))+ f_2(u(-r_1(t)))+\int_{-h}^0 f_3(r,u(r))\d r]\\&\cdot[\nabla u(0)+\nabla u(-r_2(t))+\int_{-h}^0\nabla u(r)\d r]\\&+\partial_i b_2^i(u(-r_3(t)))+\int_{-h}^0\partial_i b_3^i(r,u(r))\d r+g_2(u(-r_4(t)))+\int_{-h}^0g_3(r,u(r))\d r.\endaligned$$
$$B(t,u):=c(u(-r_5(t))).$$
Let $e_n,n\in\mathbb{N}$, $\Pi_n, \mathbb{H}_n$ be as in Section 2. We define the functional $\mathcal{N}_1$ on $\mathbb{H}$ as follows:
$$\mathcal{N}_1(u)=\left\{\begin{array}{ll}\|\nabla u\|_{L^2(O)}^2&\ \ \ \ \textrm{ if } u\in W^{1,2}_{0,\sigma}(O),\\0&\ \ \ \ \textrm{ otherwise} ,\end{array}\right.$$
It is obvious that $\mathcal{N}_1\in \mathfrak{U}^2$. In this case $\mathcal{N}(\psi)<\infty$ is equivalent to $\psi\in L^2(-h,0;V)$.

Then we have for $x,y\in C_{0}^\infty(O)$, $y_n\rightarrow y$ in $\mathbb{H}$,
\begin{equation}\|\Delta x-\Delta y\|_{\mathbb{X}^*}\leq C\|x-y\|_\mathbb{H},\end{equation}
$$\aligned|{  }_{\mathbb{X}^*}\langle f_1(x)\cdot\nabla x-f_1(y)\cdot\nabla y, \varphi\rangle_{\mathbb{X}}|\leq& C|{  }_{\mathbb{X}^*}\langle f_1(x)\cdot(\nabla x-\nabla y)+(f_1(x)-f_1(y))\cdot\nabla y, \varphi\rangle_{\mathbb{X}}\\\leq&C\|x-y\|_\mathbb{H}[\|\nabla x\|_\mathbb{H}+\|\nabla y\|_\mathbb{H}+1] \|\varphi\|_{\mathbb{X}},\endaligned$$
\begin{equation}\aligned|{  }_{\mathbb{X}^*}\langle \partial_i b_1^i(y_n)-\partial_i b_1^i(y), \varphi\rangle_{\mathbb{X}}|\leq& C\|\varphi\|_{C^1(O)}\int |b_1(\xi,y_n(\xi))-b_1(\xi,y(\xi))|\d \xi\rightarrow0.\endaligned\end{equation}
Moreover, if $y_n\rightarrow y$ in $\mathbb{H}$ and weakly in $V$, we obtain
$$\aligned|{  }_{\mathbb{X}^*}\langle g_1(y_n)-g_1(y), \varphi\rangle_{\mathbb{X}}|\leq& C\|\varphi\|_{C(O)}\int |g_1(\xi,y_n(\xi))-g_1(\xi,y(\xi))|\d \xi\rightarrow0,\endaligned$$
where we used (5.7) to deduce that
$$\|g_1(\cdot,y_n(\cdot))\|_{L^{1+\varepsilon}(O)}^{1+\varepsilon}\leq C[1+\|y_n\|_{L^{(1+\varepsilon)\gamma_1}}^{(1+\varepsilon)\gamma_1}]\leq C[1+ \|y_n\|_V^{d(\frac{(1+\varepsilon)\gamma_1}{2}-1)}\|y_n\|_{\mathbb{H}}^{(1+\varepsilon)\gamma_1-d(\frac{(1+\varepsilon)\gamma_1}{2}-1)}],$$
where $d(\frac{(1+\varepsilon)\gamma_1}{2}-1)<2$.
By (5.7)-(5.10), we can extend the operators $A_1$ and $A_2, B$ to $V$ and $\mathfrak{C}\cap L^2_V$ such that
for $x\in V, u\in \mathfrak{C}\cap L^2_V$, $A_1(x)\in \mathbb{X}^*$ and $A_2(t,u)\in \mathbb{X}^*$, $B(t,u)\in L_2(l_2,\mathbb{H})$.
Thus, we can write the system in the following abstract form
$$du(t)=[A_1(u(t))+A_2(t,u_t)]\d t+B(t,u_t)\d W(t), \quad u(t)=\psi(t), -h\leq t\leq0.$$
 We can easily deduce that
$\|\Pi_nA_1(\cdot)\|_{\mathbb{X}^*}$ is locally bounded on $\mathbb{H}_n$ and
$\|\Pi_nA_2(\cdot,\cdot)\|_{\mathbb{X}^*}, \|\Pi_nB(\cdot,\cdot)\|_{L_2(l_2;\mathbb{H})}$ are  bounded on balls in $\mathbb{R}^+\times C([-h,0];\mathbb{H}_n)$. Furthermore, we easily deduce that for every $t>0$, $\Pi_n A_1(t,\cdot)$ is continuous on $\mathbb{H}_n$, $\Pi_n A_2(t,\cdot), \Pi_n B(t,\cdot)$ are continuous on $C([-h,0];\mathbb{H}_n)$.

\beg{thm}\label{Sem1}
 For every $\psi\in   \mathfrak{C}\cap L^2_V$
    $(\ref{Sem})$
    has a weak solution $X:[-h,\infty)\times \Omega\rightarrow \mathbb{H}$ such that for every $q\in\mathbb{N}$, $T>0$
$$\E^P\left(\sup_{t\in[-h,T]}\|X(t)\|_\mathbb{H}^{2q}+[\int_0^T\mathcal{N}_1(X(t))\d t]^q \right)   < \infty.$$

\end{thm}

\proof  Now we check (H1):
Let $u_n$ converge to $u$ in $L^2_{\rm{loc}}([-h,\infty);\mathbb{H})$ and weakly in $L^{2}_{\rm{loc}}([-h,\infty);V)$. We prove the convergence required in (H1) for each term. By (5.9) we have for $\varphi\in\mathbb{X}$,
$$\aligned&|\int_0^t{ }_{\mathbb{X}^*}\langle \Delta[a_1(u_n(s-r_0(s)))-a_1(u(s-r_0(s)))], \varphi\rangle_{\mathbb{X}}\d s|\\\leq&C[\int_0^t\|u_n(s-r_0(s))-u(s-r_0(s))\|_\mathbb{H}^2\d s]^\frac{1}{2}
\\\rightarrow&0,\quad n\rightarrow\infty.
\endaligned$$
The convergence for $a_2$ can be obtained similarly. Since $f_2$ is bounded and Lipschitz continuous with respect to the second variable, we have
$$\aligned&|\int_0^t{ }_{\mathbb{X}^*}\langle f_2(u_n(s-r_1(s)))\cdot\nabla u_n(s)-f_2(u(s-r_1(s)))\cdot\nabla u(s), \varphi\rangle_{\mathbb{X}}\d s|\\\leq&C[\int_0^t\|\nabla u_n(s)\|_\mathbb{H}^2\d s]^\frac{1}{2}[\int_0^t\|u_n(s-r_1(s))-u(s-r_1(s))\|_\mathbb{H}^2\d s]^\frac{1}{2}
\\&+\int_0^t{ }_{\mathbb{X}^*}\langle f_2(u(s-r_1(s)))[ \nabla u_n(s)-\nabla u(s)], \varphi\rangle_{\mathbb{X}}\d s\\\leq&C[\int_0^t\|\nabla u_n(s)\|_\mathbb{H}^2\d s]^\frac{1}{2}[\int_{-h}^t\|u_n(s)-u(s)\|_\mathbb{H}^2\d s]^\frac{1}{2}
\\&+\int_0^t{ }_{\mathbb{X}^*}\langle f_2(u(s-r_1(s)))[\nabla u_n(s)-\nabla u(s)], \varphi\rangle_{\mathbb{X}}\d s\\\rightarrow&0,\quad n\rightarrow\infty,
\endaligned$$
where we used that $u_n$ converge to $u$  weakly in $L^{2}_{\rm{loc}}([-h,\infty);V)$ to deduce $\int_0^T \|u_n(s)\|_V^2\d s\leq M$ in the last convergence.
The convergence for $f_1(u(s))\cdot\nabla u(s),\int_{-h}^0f_3(r,u(s+r))\d r\cdot\nabla u(s)$ can be obtained similarly. Moreover, we have
$$\aligned&|\int_0^t{ }_{\mathbb{X}^*}\langle f_2(u_n(s-r_1(s)))\cdot\nabla u_n(s-r_2(s))-f_2(u(s-r_1(s)))\cdot\nabla u(s-r_2(s)), \varphi\rangle_{\mathbb{X}}\d s|\\\leq&C[\int_{-h}^t\|\nabla u_n(s)\|_\mathbb{H}^2\d s]^\frac{1}{2}[\int_0^t\|u_n(s-r_1(s))-u(s-r_1(s))\|_\mathbb{H}^2\d s]^\frac{1}{2}
\\&+\int_0^t{ }_{\mathbb{X}^*}\langle f_2(u(s-r_1(s)))\cdot[\nabla u_n(s-r_2(s))-\nabla u(s-r_2(s))], \varphi\rangle_{\mathbb{X}}\d s\\\rightarrow&0,\quad n\rightarrow\infty,
\endaligned$$
where we used that $u_n$ converge to $u$  weakly in $L^{2}_{\rm{loc}}([-h,\infty);V)$ in the last convergence. Furthermore, we get that
$$\aligned&|\int_0^t\int_{-h}^0{ }_{\mathbb{X}^*}\langle f_2(u_n(s-r_1(s)))\cdot\nabla u_n(s+r)-f_2(u(s-r_1(s)))\cdot\nabla u(s+r), \varphi\rangle_{\mathbb{X}}\d r\d s|\\\leq&Ch[\int_{-h}^t\|\nabla u_n(s)\|_\mathbb{H}^2\d s]^\frac{1}{2}[\int_0^t\|u_n(s-r_1(s))-u(s-r_1(s))\|_\mathbb{H}^2\d s]^\frac{1}{2}
\\&+\int_0^t\int_{-h}^0{ }_{\mathbb{X}^*}\langle f_2(u(s-r_1(s)))[\nabla u_n(s+r)-\nabla u(s+r)], \varphi\rangle_{\mathbb{X}}\d r\d s\\\leq&Ch[\int_{-h}^t\|\nabla u_n(s)\|_\mathbb{H}^2\d s]^\frac{1}{2}[\int_0^t\|u_n(s-r_1(s))-u(s-r_1(s))\|_\mathbb{H}^2\d s]^\frac{1}{2}
\\&+\int_{-h}^0\int_0^t{ }_{\mathbb{X}^*}\langle f_2(u(s-r_1(s)))[\nabla u_n(s+r)-\nabla u(s+r)], \varphi\rangle_{\mathbb{X}}\d s\d r\\\rightarrow&0,\quad n\rightarrow\infty,
\endaligned$$
where we used $u_n$ converge to $u$  weakly in $L^{2}_{\rm{loc}}([-h,\infty);V)$  and the Lebesgue's dominated convergence theorem in the last convergence. For the other terms in $[f_1(u(t))+ f_2(u(t-r_1(t)))+\int_{-h}^0 f_3(r,u(t+r))\d r]\cdot[\nabla u(t-r_2(t))+\int_{-h}^0\nabla u(t+r)\d r]$ we have similar estimates and obtain similar convergence required in (H1). Now for $b_2$, by (5.8) we have
$$\aligned&|\int_0^t{ }_{\mathbb{X}^*}\langle \partial_i b_2^i(u_n(s-r_3(s)))-\partial_i b_2^i(u(s-r_3(s))), \varphi\rangle_{\mathbb{X}}\d s|\\\leq&C\|\varphi\|_{C^1(O)}\int_0^t\|b_2(u_n(s-r_3(s)))-b_2(u(s-r_3(s)))\|_{L^1(O)}\d s
\\\rightarrow&0,\quad n\rightarrow\infty,
\endaligned$$
where the last convergence follows from the Lebesgue's dominated convergence theorem. The convergence for $b_1, b_3$ can be obtained similarly.
For  cut-off function $\chi_R$ as in (H1), we also have
$$\aligned&\int_0^t{ }_{\mathbb{X}^*}\int |g_1(u_n(s))\chi_R(\|u_n(s)\|_\mathbb{H})-g_1(u(s))\chi_R(\|u(s)\|_\mathbb{H})|^{1+\varepsilon} |\varphi|^{1+\varepsilon}\d \xi\d s\\\leq&C\|\varphi\|_{C(O)}^{1+\varepsilon}\int_0^t\|g_1(u_n(s))\chi_R(\|u_n(s)\|_\mathbb{H})-g_1(u(s))\chi_R(\|u(s)\|_\mathbb{H})\|_{L^{1+\varepsilon}(O)}^{1+\varepsilon}\d s
\\\leq& C\|\varphi\|_{C(O)}^{1+\varepsilon}\int_0^t[1+\|u_n(s)\|_{L^{\gamma_1(1+\varepsilon)}}^{\gamma_1(1+\varepsilon)}
\chi_R(\|u_n(s)\|_\mathbb{H})+\|u(s)\|_{L^{\gamma_1{(1+\varepsilon)}}}^{\gamma_1(1+\varepsilon)}\chi_R(\|u(s)\|_\mathbb{H})]\d s\\\leq& C\int_0^t[1+ \|u_n(s)\|_V^{d(\frac{\gamma_1(1+\varepsilon)}{2}-1)}\chi_R(\|u_n(s)\|_\mathbb{H})\|u_n(s)\|_\mathbb{H}^{\gamma_1(1+\varepsilon)-d(\frac{\gamma_1(1+\varepsilon)}{2}-1)}
\\&+\|u(s)\|_V^{d(\frac{\gamma_1(1+\varepsilon)}{2}-1)}\chi_R(\|u(s)\|_\mathbb{H})\|u(s)\|_\mathbb{H}^{\gamma_1
(1+\varepsilon)-d(\frac{\gamma_1(1+\varepsilon)}{2}-1)}]\d s\\\leq&C\int_0^t[1+ \|u_n(s)\|_V^{d(\frac{\gamma_1(1+\varepsilon)}{2}-1)}+\|u(s)\|_V^{d(\frac{\gamma_1(1+\varepsilon)}{2}-1)}]\d s.
\endaligned$$
Since $d(\frac{\gamma_1(1+\varepsilon)}{2}-1)\leq 2$ we obtain that
$$\int_0^t{ }_{\mathbb{X}^*}\langle g_1(u_n(s))\chi_R(\|u_n(s)\|_\mathbb{H})-g_1(u(s))\chi_R(\|u(s)\|_\mathbb{H}), \varphi\rangle_{\mathbb{X}}\d s\rightarrow0, n\rightarrow\infty.$$
For $g_2$ we have
$$\aligned&\int_0^t{ }_{\mathbb{X}^*}\langle g_2(u_n(s-r_4(s)))-g_2(u(s-r_4(s))), \varphi\rangle_{\mathbb{X}}\d s\\\leq&C\|\varphi\|_{C(O)}\int_0^t\|g_2(u_n(s-r_4(s)))-g_2(u(s-r_4(s)))\|_{L^1(O)}\d s
\\\rightarrow&0,n\rightarrow\infty,
\endaligned$$
where the last convergence follows from the Lebesgue's dominated convergence theorem.
The convergence for $\int_{-h}^0g_3(r,u(t+r))\d r$ can be obtained similarly. By Remark 2.2 (i), (5.11) and (5.12), we obtain
$$\int_0^t{ }_{\mathbb{X}^*}\langle \Delta u_n(s)\chi_R(\|u_n(s)\|_\mathbb{H})-\Delta u(s)\chi_R(\|u(s)\|_\mathbb{H}), \varphi\rangle_{\mathbb{X}}\d s\rightarrow0, n\rightarrow\infty.$$
$$\int_0^t{ }_{\mathbb{X}^*}\langle \partial_i b_1^i (u_n(s))\chi_R(\|u_n(s)\|_\mathbb{H})-\partial_i b_1^i(u(s))\chi_R(\|u(s)\|_\mathbb{H}), \varphi\rangle_{\mathbb{X}}\d s\rightarrow0, n\rightarrow\infty.$$
 Then combining  the estimates above, (2.3) and (2.4) in (H1) follow. By (5.8) we obtain that
if $u_n$ converge to $u$ in $L^2_{\rm{loc}}([-h,\infty);\mathbb{H})$, then
$$\lim_{n\rightarrow\infty}\int_0^t\|B^*(s,u_s^n)(v)-B^*(s,u_s)(v)\|_{l_2}\d s=0.$$
 Now (H1) follows.

For (H2); by (5.7)-(5.10)
we have for $u\in C([-h,\infty),\mathbb{X})$ and $T>0$
$$\aligned&\int_0^T{ }_{\mathbb{X}^*}\langle A_1(u(t))+A_2(u_t),u(t)\rangle_{\mathbb{X}}\d t\\=&\int_0^T[-\mathcal{N}_1(u(t))+{ }_{\mathbb{X}^*}\langle\Delta [a_1(u(t-r_0(t)))+\int_{-h}^0a_2(u(t+r))\d r],u(t)\rangle_{\mathbb{X}}
+\langle(f_1(u(t))+f_2(u(t-r_1(t)))\\&+\int_{-h}^0f_3(r,u(t+r))\d r)\cdot(\nabla u(t)+\nabla u(t-r_2(t))+\int_{-h}^0\nabla u(t+r)\d r), u(t)\rangle\\
&-\langle b_1^i(u(t))+b_2^i(u(t-r_3(t)))+\int_{-h}^0b_3^i(r,u(t+r))\d r, \partial_i u(t)\rangle\\
&+\langle g_1(u(t))+g_2(u(t-r_4(t)))+\int_{-h}^0g_3(r,u(t+r))\d r, u(t)\rangle]\d t\\\leq&\int_0^T[-\frac{7}{8}\mathcal{N}_1(u(t))+C(1+\|u_t\|_\mathfrak{C}^2)]\d t+\int_{-h}^0\mathcal{N}_1(u(t))\d t\\&+\int_0^T\int  [[|\partial_u a_1(u(t-r_0(t)))||\nabla u(t-r_0(t))|+|\partial_\xi a_1(u(t-r_0(t)))|]|\nabla u(t)|\\&+\int_{-h}^0[|\partial_u a_2(u(t+r))||\nabla u(t+r)|+|\partial_\xi a_2(u(t+r))|]|\nabla u(t)|\d r]\d \xi\d t\\\leq&\int_0^T[-\frac{3}{4}\mathcal{N}_1(u(t))+C(1+\|u_t\|_\mathfrak{C}^2)]\d t+\int_{-h}^0\mathcal{N}_1(u(t))\d t+\left(\int_0^T\int|\nabla u(t)|^2\d\xi\d t\right)^{1/2}\\&[\left(\int_0^T\int  [|\partial_u a_1(u(t-r_0(t)))|^2|\nabla u(t-r_0(t))|^2]\d\xi\d t \right)^{1/2}\\&+h^{1/2}\left(\int_0^T\int \int_{-h}^0|\partial_u a_2(u(t+r))|^2|\nabla u(t+r)|^2\d r\d \xi\d t\right)^{1/2}]\\\leq&\int_0^T[-\frac{1}{2}\mathcal{N}_1(u(t))+C(1+\|u_t\|_\mathfrak{C}^2)]\d t+2\int_{-h}^0\mathcal{N}_1(u(t))\d t .\endaligned$$

For (H3); Similar as the computation above we have  $u\in L^2_{\rm{loc}}([-h,\infty);V)\cap L_{\rm{loc}}^\infty([-h,\infty);\mathbb{H})\cap C([-h,\infty);\mathbb{X}^*)$ and every $t>0$
$$\|A_1(u(t))\|_{\mathbb{X}^*}\leq C(1+\|u_t\|_\mathfrak{C}+\|u(t)\|_V^{d(\frac{\gamma_1}{2}-1)}\|u(t)\|_\mathbb{H}^{(\gamma_1-d(\frac{\gamma_1}{2}-1))}),$$
$$\aligned\|A_2(t,u_t)\|_{\mathbb{X}^*}\leq& C[1+\|u(t-r_0(t))\|_\mathbb{H}+\|u_t\|_\mathfrak{C}+\|\nabla u(t)\|_\mathbb{H}\\&+\|\nabla u(t-r_2(t))\|_\mathbb{H}+\int_{-h}^0\|\nabla u(t+r)\|_\mathbb{H}\d r]
,\endaligned$$
$$\|B(t,u_t)\|^2_{L_2(l^2;\mathbb{H})}\leq  C(1+\|u_t\|_\mathfrak{C}^2),$$
where $C$ is a constant changing from line to line.
Now for $1<\gamma<2$ with $d\gamma(\frac{\gamma_1}{2}-1)<2$ we have for every $T>0$
$$\aligned& \int_0^T[\|A_1(u(t))\|_{\mathbb{X}^*}^\gamma+\|A_2(t,u_t)\|_{\mathbb{X}^*}^\gamma]\d t\\\leq&\int_0^T C(1+ \|u(t)\|_V^{d\gamma(\frac{\gamma_1}{2}-1)}\|u(t)\|_\mathbb{H}^{\gamma(\gamma_1-d(\frac{\gamma_1}{2}-1))}+\|u_t\|_\mathfrak{C}^2+\|u(t)\|_V^2)\d t+\int_{-h}^0 C\|u(t)\|_V^2\d t,\endaligned$$
$$\|B(t,u_t)\|^2_{L_2(l^2;\mathbb{H})}\leq  C(1+\|u_t\|_\mathfrak{C}^2).$$
Now (H3) follows. Then the results follow from Theorem 2.2.\qed

Now we assume that
\begin{equation}\aligned&\sum_{i=1}^2|g_i(\xi,u_1)-g_i(\xi, u_2)|+|g_3(r,\xi,u_1)-g_3(r,\xi, u_2)|\\\leq& C[|u_1|^{\gamma_3}+|u_2|^{\gamma_3}+1]|u_1-u_2|,\forall (r,\xi, u_j)\in[-h,0]\times O\times \mathbb{R}, j=1,2, \endaligned \end{equation}
for $1\leq \gamma_3\leq\frac{4}{d}$.
$$\|c(\xi,u_1)-c(\xi,u_2)\|_{l^2}\leq C|u_1-u_2|.$$
$\partial_u a_1, \partial_u a_2$ do not depend on $u$.

If $d\leq 2$, suppose that
\begin{equation}\aligned&\sum_{i=1}^2\|b_i(\xi,u_1)-b_i(\xi, u_2)\|_{\mathbb{R}^d}+\|b_3(r,\xi,u_1)-b_3(r,\xi, u_2)\|_{\mathbb{R}^d}\\\leq& C(1+|u_1|+|u_2|)|u_1-u_2|,\forall (r,\xi, u_j)\in[-h,0]\times O\times \mathbb{R}, j=1,2,\endaligned \end{equation}
\begin{equation}\aligned&\|\partial_\xi a_1(\xi, u_1)-\partial_\xi a_1(\xi, u_2)\|_{\mathbb{R}^d}+\|\partial_\xi a_2(r,\xi, u_1)-\partial_\xi a_2(r,\xi, u_2)\|_{\mathbb{R}^d}\\\leq& C(1+|u_1|+|u_2|)|u_1-u_2|,\forall (r,\xi, u_j)\in[-h,0]\times O\times \mathbb{R}, j=1,2,\endaligned \end{equation}
If $d\geq 3$, $f_1, f_2, f_3$ do not depend on $u$, $b_1, b_2, \partial_\xi a_1$ and $b_3, \partial_\xi a_2$ are Lipschitz continuous with respect to the second variable and the third variable, respectively.

Choose $H_0=\mathbb{H}$.

If $\gamma_1\leq (1+\frac{4}{d})\wedge2(1+\frac{1}{d})-\varepsilon$ for some $\varepsilon>0$,
we have that for $u\in L^2_V\cap \mathfrak{C}$, $$\|g_1(u)\|_{V^*}\leq \|g_1(u)\|_{L^q}\leq C(1+\|u\|_V^{d(\frac{\gamma_1}{2}-\frac{1}{q})}\|u\|_\mathbb{H}^{\gamma_1-d(\frac{\gamma_1}{2}-\frac{1}{q})}),$$
where $q=\frac{2d}{d+2}\vee (1+\varepsilon_0)$ for some $\varepsilon_0$ and we used Lemma 5.1 in the first inequality.
From this we see that the following mappings
$$ A_1: V\rightarrow V^*, \  A_2: \mathbb{R}^+\times (L^2_V\cap \mathfrak{C})\rightarrow V^*  $$
are well defined. For $ u \in \mathbb{S}_1$, we easily deduce that
$A_1(\cdot, u (\cdot)), A_2(\cdot, u _\cdot)\in L^1_{\rm{loc}}([0,\infty);V^*), B(\cdot, u _\cdot)\in L^2_{\rm{loc}}([0,\infty);L_2(l_2;H_0))$.

\beg{thm}\label{Tsm} Suppose that all the assumptions above are satisfied with $\gamma_1\leq (1+\frac{4}{d})\wedge2(1+\frac{1}{d})-\varepsilon$ for some $\varepsilon>0$.
 For every $\psi\in L^2(-h,0;V)\cap  C([-h,0];  H_0) ,$
    $(\ref{Sem})$
    has a unique (probabilistically) strong solution $X\in C([-h,\infty);H_0)$ satisfying for every $T>0, q\in\mathbb{N}$
$$\E^P\left(\sup_{t\in[-h,T]}\|X(t)\|_\mathbb{H}^{2q}+[\int_0^T\mathcal{N}_1(X(t))\d t]^q \right)   < \infty.$$

\proof  Now we check (H4):

When $d\geq 3$ for $ f_1, f_2, f_3$  it is obvious that (H4) holds. Now we only check the case  when $d\leq 2$: for $u,v\in L^2_{\rm{loc}}([-h,\infty);V)\cap C([-h,\infty);\mathbb{H})$, $T>0$, we have
$$\aligned&\int_0^T{ }_{V^*}\langle f_2(u(s-r_1(s)))\cdot\nabla u(s-r_2(s))-f_2(v(s-r_1(s)))\cdot\nabla v(s-r_2(s)), u(s)-v(s)\rangle_{V}\d s\\\leq&C\int_0^T\|\nabla u(s-r_2(s))\|_\mathbb{H}\|u(s-r_1(s))-v(s-r_1(s))\|_{L^4}\|u(s)-v(s)\|_{L^4}
\d s\\&+\varepsilon\int_0^T\|\nabla u(s-r_2(s))-\nabla v(s-r_2(s))\|_\mathbb{H}^2\d s+C\int_0^T\|u(s)-v(s)\|_\mathbb{H}^2\d s\\\leq&C\int_0^T\|\nabla u(s-r_2(s))\|_\mathbb{H}\|u(s-r_1(s))-v(s-r_1(s))\|_{V}^{1/2}\|u(s-r_1(s))-v(s-r_1(s))\|_{\mathbb{H}}^{1/2}\\&\|u(s)-v(s)\|_{V}^{1/2}\|u(s)-v(s)\|_{\mathbb{H}}^{1/2}
\d s+\varepsilon\int_{-h}^T\|\nabla u(s)-\nabla v(s)\|_\mathbb{H}^2\d s+C\int_0^T\|u(s)-v(s)\|_\mathbb{H}^2\d s
\\\leq&C\int_0^T\|\nabla u(s-r_2(s))\|_\mathbb{H}^2\|u_s-v_s\|_{\mathfrak{C}}^2\d s+\varepsilon[\int_0^T\|u(s)-v(s)\|_{V}^2\d s]^{1/2}
\\&[\int_0^T\|u(s-r_1(s))-v(s-r_1(s))\|_{V}^2\d s]^{1/2}+\varepsilon\int_{-h}^T\|\nabla u(s)-\nabla v(s)\|_\mathbb{H}^2\d s+C\int_0^T\|u(s)-v(s)\|_\mathbb{H}^2\d s
\\\leq&C\int_0^T\|\nabla u(s-r_2(s))\|_\mathbb{H}^2\|u_s-v_s\|_{\mathfrak{C}}^2+\varepsilon\int_{-h}^T\|\nabla u(s)-\nabla v(s)\|_\mathbb{H}^2\d s+C\int_0^T\|u(s)-v(s)\|_\mathbb{H}^2\d s.\endaligned$$
We also have for $u,v\in L^2_{\rm{loc}}([-h,\infty);V)\cap C([-h,\infty);\mathbb{H})$, $T>0$,
$$\aligned&\int_0^T{ }_{V^*}\langle f_2(u(s-r_1(s)))\cdot\int_{-h}^0\nabla u(s+r)\d r-f_2(v(s-r_1(s)))\cdot\int_{-h}^0\nabla v(s+r)\d r, u(s)-v(s)\rangle_{V}\d s\\\leq&C\int_0^T\int_{-h}^0\|\nabla u(s+r)\|_\mathbb{H}\d r\|u(s-r_1(s))-v(s-r_1(s))\|_{L^4}\|u(s)-v(s)\|_{L^4}
\d s\\&+\varepsilon\int_0^T\int_{-h}^0\|\nabla u(s+r)-\nabla v(s+r)\|_\mathbb{H}^2\d r\d s+C\int_0^T\|u(s)-v(s)\|_\mathbb{H}^2\d s
\\\leq&C\int_0^T\int_{-h}^0\|\nabla u(s+r)\|_\mathbb{H}^2\d r\|u_s-v_s\|_{\mathfrak{C}}^2\d s+\varepsilon[\int_0^T\|u(s)-v(s)\|_{V}^2\d s]^{1/2}
\\&[\int_0^T\|u(s-r_1(s))-v(s-r_1(s))\|_{V}^2\d s]^{1/2}+\varepsilon h\int_{-h}^T\|\nabla u(s)-\nabla v(s)\|_\mathbb{H}^2\d s+C\int_0^T\|u(s)-v(s)\|_\mathbb{H}^2\d s
\\\leq&C\int_0^T\int_{-h}^0\|\nabla u(s+r)\|_\mathbb{H}^2\d r\|u_s-v_s\|_{\mathfrak{C}}^2\d s+\varepsilon \int_{-h}^T\|\nabla u(s)-\nabla v(s)\|_\mathbb{H}^2\d s+C\int_0^T\|u(s)-v(s)\|_\mathbb{H}^2\d s.\endaligned$$
We obtain similar bounds for the other terms including $f_1, f_3$.
Now when $d\geq 3$ we have for $u,v\in L^2_{\rm{loc}}([-h,\infty);V)\cap C([-h,\infty);\mathbb{H})$, $s>0$
$$\aligned&|{ }_{V^*}\langle b_2^i(u(s-r_3(s)))-b_2^i(v(s-r_3(s))), \partial_i u(s)-\partial_i v(s)\rangle_{V}|\\\leq&C\|u_s-v_s\|_\mathfrak{C}^2
+\varepsilon\|\nabla u(s)-\nabla v(s)\|_\mathbb{H}^2.
\endaligned$$
We obtain similar bounds for $ b_3$.

When $d\leq 2$  we
 have  for $u,v\in L^2_{\rm{loc}}([-h,\infty);V)\cap C([-h,\infty);\mathbb{H})$, $T>0$,
$$\aligned&\int_0^T|{ }_{V^*}\langle b_2^i(u(s-r_3(s)))-b_2^i(v(s-r_3(s))), \partial_i u(s)-\partial_i v(s)\rangle_{V}|\d s\\\leq&C\int_0^T\|b_2(u(s-r_3(s)))-b_2(v(s-r_3(s)))\|_\mathbb{H}^2\d s
+\varepsilon\int_0^T\|\nabla u(s)-\nabla v(s)\|_\mathbb{H}^2\d s\\\leq&C\int_0^T[1+\|u(s-r_3(s))\|_{L^4}^2+\|v(s-r_3(s))\|_{L^4}^2]\|u(s-r_3(s))-v(s-r_3(s))\|_{L^4}^2\d s
\\&+\varepsilon\int_0^T\|\nabla u(s)-\nabla v(s)\|_\mathbb{H}^2\d s\\\leq&C\int_{0}^T[1+\|u(s-r_3(s))\|_{V}\|u(s-r_3(s))\|_{\mathbb{H}}+\|v(s-r_3(s))\|_{V}\|v(s-r_3(s))\|_{\mathbb{H}}]\\&\|u(s-r_3(s))-v(s-r_3(s))
\|_{V}\|u(s-r_3(s))-v(s-r_3(s))\|_{\mathbb{H}}\d s+\varepsilon\int_0^T\|\nabla u(s)-\nabla v(s)\|_\mathbb{H}^2\d s\\\leq&C\int_{0}^T[1+\|u(s-r_3(s))\|_{V}^2\|u(s-r_3(s))\|_{\mathbb{H}}^2+\|v(s-r_3(s))\|_{V}^2\|v(s-r_3(s))\|^2_{\mathbb{H}}]\|u_s-v_s\|_{\mathfrak{C}}^2\d s
\\&+\varepsilon\int_{-h}^T\|u(s)-v(s)\|_V^2\d s.
\endaligned$$
The other terms including $b_1, b_3$ can be estimated similarly.
For $a_1$ we
 have that for $u,v\in L^2_{\rm{loc}}([-h,\infty);V)\cap C([-h,\infty);\mathbb{H})$, $T>0$,
$$\aligned&\int_0^T|{ }_{V^*}\langle\Delta( a_1(u(s-r_0(s)))-a_1(v(s-r_0(s)))),  u(s)- v(s)\rangle_{V}|\d s\\\leq&C\int_0^T\|\partial_\xi a_1(u(s-r_0(s)))-\partial_\xi a_1(v(s-r_0(s)))\|_\mathbb{H}^2\d s
+(\varepsilon+\frac{1}{4})\int_{-h}^T\|\nabla u(s)-\nabla v(s)\|_\mathbb{H}^2\d s.
\endaligned$$
Then by the above estimates for $b_2$, $a_1$ can be estimated similarly. We also have similar estimates for $a_2$.

For $g$  we
 have for $u,v\in L^2_{\rm{loc}}([-h,\infty);V)\cap C([-h,\infty);\mathbb{H})$, $s>0$
$$\aligned&{ }_{V^*}\langle g_1(u(s))-g_1(v(s)), u(s)-v(s)\rangle_{V}\\\leq&C\int[|u(s)|^{\gamma_3}+|v(s)|^{\gamma_3}+1]|u(s)-v(s)|^{2}\d \xi
\\\leq&C[1+\|u\|_{L^{\gamma_3p}}^{\gamma_3}+\|v\|_{L^{\gamma_3p}}^{\gamma_3}]\|u-v\|_{L^{2q}}^2
\\\leq&C[1+\|u\|_{L^{\gamma_3p}}^{\gamma_3}+\|v\|_{L^{\gamma_3p}}^{\gamma_3}]\|u-v\|_{V}^{2\sigma}\|u-v\|_{\mathbb{H}}^{2(1-\sigma)}
\\\leq&C[1+\|u\|_{L^{\gamma_3p}}^{\gamma_3\frac{1}{1-\sigma}}+\|v\|_{L^{\gamma_3p}}^{\gamma_3\frac{1}{1-\sigma}}]
\|u-v\|_{\mathbb{H}}^{2}+\varepsilon\|u-v\|_{V}^{2}\\\leq&C[1+\|u\|_{V}^{\sigma_0\gamma_3\frac{1}{1-\sigma}}
\|u\|_{\mathbb{H}}^{(1-\sigma_0)\gamma_3\frac{1}{1-\sigma}}+\|v\|_{V}^{\sigma_0\gamma_3\frac{1}{1-\sigma}}\|v\|_{\mathbb{H}}^{(1-\sigma_0)\gamma_3\frac{1}{1-\sigma}}]
\|u-v\|_{\mathbb{H}}^{2}+\varepsilon\|u-v\|_{V}^{2},\endaligned$$
where $\frac{1}{p}+\frac{1}{q}=1, \sigma=(\frac{1}{2}-\frac{1}{2q})d<1, \sigma_0=(\frac{1}{2}-\frac{1}{\gamma_3p})d<1, \frac{\sigma_0\gamma_3}{1-\sigma}\leq 2$.
Similarly we have for $u,v\in L^2_{\rm{loc}}([-h,\infty);V)\cap C([-h,\infty);\mathbb{H})$, $T>0$
$$\aligned&\int_0^T{ }_{V^*}\langle g_2(u(s-r_4(s)))-g_2(v(s-r_4(s))), u(s)-v(s)\rangle_{V}\\\leq&C\int_0^T\int[|u(s-r_4(s))|^{\gamma_3}+|v(s-r_4(s))|^{\gamma_3}+1]|u(s-r_4(s))-v(s-r_4(s))||u(s)-v(s)|\d \xi
\\\leq&C\int_0^T[1+\|u(s-r_4(s))\|_{L^{\gamma_3p}}^{\gamma_3}+\|v(s-r_4(s))\|_{L^{\gamma_3p}}^{\gamma_3}]\|u(s-r_4(s))-v(s-r_4(s))\|_{L^{2q}}\\&\|u(s)-v(s)\|_{L^{2q}}\d s
\\\leq&C\int_0^T[1+\|u(s-r_4(s))\|_{L^{\gamma_3p}}^{\gamma_3}+\|v(s-r_4(s))\|_{L^{\gamma_3p}}^{\gamma_3}]\|u(s-r_4(s))-v(s-r_4(s))\|_{V}^{\sigma}
\|u(s)-v(s)\|_{V}^{\sigma}\\&
\|u(s-r_4(s))-v(s-r_4(s))\|_{\mathbb{H}}^{1-\sigma}\|u(s)-v(s)\|_{\mathbb{H}}^{1-\sigma}\d s
\\\leq&C\int_0^T[1+\|u(s-r_4(s))\|_{V}^{\sigma_0\gamma_3\frac{1}{1-\sigma}}
\|u(s-r_4(s))\|_{\mathbb{H}}^{(1-\sigma_0)\gamma_3\frac{1}{1-\sigma}}\\&+\|v(s-r_4(s))\|_{V}^{\sigma_0\gamma_3\frac{1}{1-\sigma}}
\|v(s-r_4(s))\|_{\mathbb{H}}^{(1-\sigma_0)\gamma_3\frac{1}{1-\sigma}}]
\|u_s-v_s\|_{\mathfrak{C}}^{2}\d s\\&+\varepsilon\int_{-h}^T\|u(s)-v(s)\|_{V}^{2}\d s.\endaligned$$
$g_3$ can be estimated similarly.

For (H5);
for $a$ we have for $u\in L^2_{\rm{loc}}([-h,\infty); V)\cap C([-h,\infty);\mathbb{X}^*)\cap L^\infty_{\rm{loc}}(-h,\infty;\mathbb{H}), T>0$
$$\aligned&\int_0^T\|\Delta a_1(u(t-r_0(t)))\|_{V^*}^2\d t\\\leq& C\int_0^T[\|\partial_u a_1(u(t-r_0(t)))\nabla u(t-r_0(t))\|_{\mathbb{H}}^2
+\|\partial_\xi a_1(u(t-r_0(t)))\|_{\mathbb{H}}^2]\d t\\\leq& C\int_0^T[\|\nabla u(t-r_0(t))\|_{\mathbb{H}}^2
+\|u(t-r_0(t))\|_{\mathbb{H}}^2+1]\d t\\\leq&C\int_{-h}^T[\|\nabla u(t)\|_{\mathbb{H}}^2
+\|u(t)\|_{\mathbb{H}}^2+1]\d t.\endaligned$$
Since  $\gamma_1\leq (1+\frac{4}{d})\wedge 2(1+\frac{1}{d})-\varepsilon$, by Lemma 5.1 and the interpolation inequality, we have for  $u\in L^2_{\rm{loc}}([-h,\infty);V)\cap L_{\rm{loc}}^\infty([-h,\infty);\mathbb{H})\cap C([-h,\infty);\mathbb{X}^*)$ and every $T>0$
$$\aligned\|A_1(u(t))\|_{V^*}\leq &C(1+\|u_t\|_\mathfrak{C}+\|u(t)\|_V^{d(\frac{\gamma_1}{2}-\frac{1}{q})}\|u(t)\|_\mathbb{H}^{(\gamma_1-d(\frac{\gamma_1}{2}-\frac{1}{q}))}+\|\nabla u(t)\|_\mathbb{H}),\endaligned$$
$$\aligned\|A_2(t,u_t)\|_{V^*}\leq& C[1+\|u(t-r_0(t))\|_V+\|u_t\|_\mathfrak{C}+\|\nabla u(t)\|_\mathbb{H}\\&+\|\nabla u(t-r_2(t))\|_\mathbb{H}+\int_{-h}^0\|\nabla u(t+r)\|_\mathbb{H}\d r]
,\endaligned$$
$$\|B(t,u_t)\|^2_{L_2(l^2;\mathbb{H})}\leq  C(1+\|u_t\|_\mathfrak{C}^2),$$
where $q=\frac{2d}{d+2}\vee(1+\varepsilon)$, $W^{-1,2}\subset L^q$ continuously  and $C$ is a constant changing from line to line.
Now  we have for every $T>0$
$$\aligned& \int_0^T[\|A_1(u(t))\|_{V^*}^2+\|A_2(t,u_t)\|_{V^*}^2]\d t\\\leq&\int_0^T C(1+ \|u(t)\|_V^{2d(\frac{\gamma_1}{2}-\frac{1}{q})}\|u(t)\|_\mathbb{H}^{2(\gamma_1-d(\frac{\gamma_1}{2}-\frac{1}{q}))}+\|u(t)\|_V^2+\|u_t\|_\mathfrak{C}^2)\d t+\int_{-h}^0 C\|u(t)\|_V^2\d t\\&+\int_0^T(\|\nabla u(t)\|_\mathbb{H}^2+\|\nabla u(t-r_2(t))\|^2_\mathbb{H}+\int_{-h}^0\|\nabla u(t+r)\|^2_\mathbb{H}\d r)\d t\\\leq& [\int_0^T C(1+ \|u(t)\|_V^{2})\d t]^2+\sup_{t\in[-h,T]}\|u(t)\|_\mathbb{H}^{4(\gamma_1-d(\frac{\gamma_1}{2}-\frac{1}{q}))}+\int_{-h}^0 C\|u(t)\|_V^2\d t\\&+C\int_{-h}^T\|\nabla u(t)\|_\mathbb{H}^2\d t+C(1+\sup_{t\in[-h,T]}\|u(t)\|_\mathbb{H}^2),\endaligned$$
and
$$\|B(t,u_t)\|^2_{L_2(l^2,\mathbb{H})}\leq  C(1+\|u_t\|_\mathfrak{C}^2).$$
Now (H5) follows. Then the results follow from Theorem 4.2.\qed

\end{thm}

\subsection{Stochastic generalized Porous Medium Euqations}

 For $k\geq0, p>1$, the dual space of $W_0^{k,p}(O)$ is given by $W^{-k,p'}(O)$, where $p'=\frac{p}{p-1}$. The following Sobolev embeddings hold:
$$W_0^{k,p}(O)\subset C^m(\bar{O}), 0\leq m<k-\frac{d}{p}.$$ By Poincare's inequality we have for $x\in W_0^{1,2}(O)$
$$\int_O|x(\xi)|^2\d \xi\leq \rho_O\int_O|\nabla x(\xi)|^2\d \xi.$$

Consider the following quasi linear SPDE with Dirichlet boundary conditions:
\begin{equation}\label{PM}\aligned dX(t)=&[\partial_{ij}^2a^{ij}(\xi,X(t))+\partial_ib^i(\xi,X(t))+c(\xi,X(t))]\d t\\&+[\partial_ib_1^i(\xi,X(t-r_1(t)),X(t))+\int_{-h}^0\partial_ib_2^i(r,\xi,X(t+r),X(t))\d r+c_1(\xi,X(t-r_2(t)))\\&+\int_{-h}^0c_2(r,\xi,X(t+r))\d r]\d t
+\sigma_i(\xi,X(t-r_3(t)))\d W^i(t),\endaligned\end{equation}
$$X(t,\xi)=0, \quad (t,\xi)\in \mathbb{R}^+\times \partial O, $$
$$X(t)=\psi(t), -h\leq t\leq 0,$$
where  $r_i, i=1,2,3,$  are continuous differentiable  functions on $\mathbb{R}$ satisfying $-h\leq r_i\leq 0, i=1,2,3$ and $\sup r_i'(t)<1, i=1,2$, $a,b,c, c_1$ and $\sigma$ are functions from $O\times \mathbb{R}$ to $\mathbb{R}^{d^2}$, $\mathbb{R}^d$, $\mathbb{R}$, $\mathbb{R}$ and $l^2$ respectively,  continuous  with respect to the second variable; $b_1(\xi,u_1,u_2)$ is a  function from $O\times \mathbb{R}\times \mathbb{R}$ to   $\mathbb{R}^d$,  continuous with respect to $(u_1,u_2)$;  $ b_2(r,\xi,u_1,u_2)$ is a function from $[-h,0]\times O\times \mathbb{R}\times \mathbb{R}$ to  $\mathbb{R}^d$, continuous with respect to $(u_1,u_2)$;  $c_1(\xi,u_1)$ is a function from $ O\times \mathbb{R}$ to $\mathbb{R}$, $c_2(r,\xi,u_1)$ is a function from $[-h,0]\times O\times \mathbb{R}$ to $\mathbb{R}$,  continuous with respect to the third variable, and satisfy for some fixed $q\geq2$ and all $\xi\in O, u_1,u_2\in\mathbb{R}, r\in[-h,0]$:
$$\partial_ua^{ij}(\xi,u_1)x_ix_j\geq \kappa_{a,0}|u_1|^{q-2}|x|^2, \quad x\in\mathbb{R}^d,$$
$$\|a(\xi,u_1)\|_{\mathbb{R}^{d^2}}\leq \kappa_{a,1}(|u_1|^{q-1}+1),$$
$$\|\partial_ja^{\cdot j}(\xi,u_1)\|_{\mathbb{R}^d}+\|b(\xi,u_1)\|_{\mathbb{R}^d}\leq \kappa_{a,b}|u_1|^{q-1}+\kappa_{a,b}'|u_1|^{\frac{q}{2}},$$
$$|c(\xi,u_1)|\leq \kappa_{c,1}|u_1|^{q-1}+\kappa_{c,2}(|u_1|+1),$$
$$\|b_1(\xi,u_1,u_2)\|_{\mathbb{R}^d}\leq C|u_1||u_2|^{\frac{q}{2}-1},$$
$$\|b_2(r,\xi,u_1,u_2)\|_{\mathbb{R}^d}\leq C|u_1||u_2|^{\frac{q}{2}-1},$$
$$|c_1(\xi,u_1)|\leq \kappa_{c,2}(|u_1|+1),$$
$$|c_2(r,\xi,u_1)|\leq \kappa_{c,2}(|u_1|+1),$$
$$\|\sigma(\xi,u_1)\|_{l^2}\leq \kappa_\sigma (|u_1|+1),$$
where all $\kappa$ with subscripts are strictly positive constants, and
$$\frac{\kappa_{a,b}}{2}(1+\frac{q^2\rho_O}{4})+\frac{\kappa_{c,1}\cdot q^2\rho_O}{4}\leq \frac{\kappa_{a,0}}{2}.$$
We choose
$$V=\mathbb{Y}:=L^q(O),\quad \mathbb{H}:=L^2(O),$$
and
$$\mathbb{X}:=W^{d+2,2}_0(O), \quad \mathbb{X}^*:=W^{-d-2,2}(O).$$
Let $e_n,n\in\mathbb{N},$,  $\Pi_n, \mathbb{H}_n$ be as in Section 2. Define the functional $\mathcal{N}_1$ on $\mathbb{Y}$ as follows:
$$\mathcal{N}_1(y):=\left\{\begin{array}{ll}\frac{\kappa_0}{q^2}\int |\nabla (|y(\xi)|^{\frac{q}{2}-1}y(\xi))|^2\d \xi&\ \ \ \ \textrm{ if } |y|^{\frac{q}{2}-1}y\in W^{1,2}_0(O),\\+\infty&\ \ \ \textrm{  otherwise }.\end{array}\right.$$
Then by [GRZ09 Lemma 5.1] we know $\mathcal{N}_1\in\mathfrak{U}^q$. In this case, if $\psi\in L^q (-h,0; W_0^{1\vee\frac{d+\varepsilon}{2},2})$ for some $\varepsilon>0$, we have $\mathcal{N}_1(\psi)\leq C\|\psi\|_{W_0^{1\vee\frac{d+\varepsilon}{2},2}}$, then $\mathcal{N}(\psi)<\infty$.

Define for $x\in V=L^q(O)$
$$A_1(x):=\partial_{ij}^2a^{ij}(\cdot,x(\cdot))+\partial_ib^i(\cdot,x(\cdot))+c(\cdot,x(\cdot))\in\mathbb{X}^*,$$
and for $u\in L_V^q\cap \mathfrak{C}$,
$$\aligned A_2(t,u):=&\partial_ib_1^i(\cdot,u(-r_1(t)),u(0))+\int_{-h}^0\partial_ib_2^i(r,\cdot,u(r),u(0))\d r+c_1(\cdot,u(-r_2(t)))\\&+\int_{-h}^0c_2(r,\cdot,u(r))\d r\in\mathbb{X}^*,\endaligned$$
$$B(t,u):=\sigma(\cdot,u(-r_3(t)))\in L_2(l^2;\mathbb{H}).$$
Thus, we can write the system in the following abstract form
$$\d X(t)=[A_1(X(t))+A_2(t,X_t)]\d t+B(t,X_t)\d W(t), \quad u(0)=u_0.$$
 We can easily deduce that
$\|\Pi_nA_1(\cdot)\|_{\mathbb{X}^*}$ is locally bounded on $\mathbb{H}_n$ and
$\|\Pi_nA_2(\cdot,\cdot)\|_{\mathbb{X}^*}, \|\Pi_nB(\cdot,\cdot)\|_{L_2(l_2;\mathbb{H})}$ are  bounded on balls in $\mathbb{R}^+\times C([-h,0];\mathbb{H}_n)$. Furthermore, we easily deduce that for every $t>0$ $\Pi_n A_1(t,\cdot)$ is continuous on $\mathbb{H}_n$ and $\Pi_n A_2(t,\cdot), \Pi_n B(t,\cdot)$ are continuous on $C([-h,0];\mathbb{H}_n)$.

\beg{thm}\label{Tpm}
 For every $\psi\in L^q(-h,0; V)\cap  \mathfrak{C}$ with $\mathcal{N}(\psi)<\infty$
    $(\ref{PM})$
    has a weak solution $X:[-h,\infty)\times \Omega\rightarrow \mathbb{H}$ such that for every $q_1\in\mathbb{N}$, $T>0$
$$\E^P\left(\sup_{t\in[-h,T]}\|X(t)\|_\mathbb{H}^{2q_1}+[\int_0^T\mathcal{N}_1(X(t))dt]^{q_1} \right)   < \infty.$$

\end{thm}

\proof Now for (H1); let $u_n$ converge to $u$ in $L^2_{\rm{loc}}([-h,\infty);\mathbb{H})$ and weakly in $L^{q}_{\rm{loc}}([-h,\infty);V)$.  We have for  cut-off function $\chi_R$ and every $t>0$
$$\aligned&|\int_0^t{ }_{\mathbb{X}^*}\langle \partial^2_{ij}a(u_n(s)), \varphi\rangle_{\mathbb{X}}\chi_R(\|u_n(s)\|_\mathbb{H})-{ }_{\mathbb{X}^*}\langle\partial^2_{ij}a(u(s)), \varphi\rangle_{\mathbb{X}}\chi_R(\|u(s)\|_\mathbb{H})\d s|
\\=&|\int_0^t\int [a(u_n(s))\chi_R(\|u_n(s)\|_\mathbb{H})-a(u(s))\chi_R(\|u(s)\|_\mathbb{H})] \partial^2_{ij}\varphi\d\xi\d s|.
\endaligned$$
Moreover, we obtain for all $n\in\mathbb{N}$
$$\aligned &\int_0^t\int |a(u_n(s))\chi_R(\|u_n(s)\|_\mathbb{H})-a(u(s))\chi_R(\|u(s)\|_\mathbb{H})|^{\frac{q}{q-1}}\d\xi\d s\\\leq&C\|\varphi\|_{C^2(O)}\int_0^t\int [|u_n(s)|^{q}+|u(s)|^{q}+1]\d \xi\d s\leq M,\endaligned$$
which implies that  for every $t>0$
$$\aligned&\int_0^t{ }_{\mathbb{X}^*}\langle \partial^2_{ij}a(u_n(s)), \varphi\rangle_{\mathbb{X}}\chi_R(\|u_n(s)\|_\mathbb{H})-{ }_{\mathbb{X}^*}\langle\partial^2_{ij}a(u(s)), \varphi\rangle_{\mathbb{X}}\chi_R(\|u(s)\|_\mathbb{H})\d s
\rightarrow0.
\endaligned$$
Similar convergence also hold for the terms including $b$ and $c$. Then (2.3) follows. Now for (2.4) we have for every $t>0$
$$\aligned&|\int_0^t{ }_{\mathbb{X}^*}\langle \partial_i b_1^i(u_n(s-r_1(s)),u_n(s))-\partial_i b_1^i(u(s-r_1(s)),u(s)), \varphi\rangle_{\mathbb{X}}\d s|\\\leq&C\|\varphi\|_{C^1(O)}\int_0^t\int|b_1(u_n(s-r_1(s)),u_n(s))-b_1(u(s-r_1(s)),u(s))|\d\xi\d s
\\\leq&C\|\varphi\|_{C^1(O)}\int_0^t\int [|u_n(s-r_1(s))||u_n(s)|^{q/2-1}+|u(s-r_1(s))||u(s)|^{q/2-1}]\d \xi\d s.
\endaligned$$
Now for $1<\gamma<\frac{q}{q-1}$ we have for every $t>0$
$$\aligned&\int_0^t\int [|u_n(s-r_1(s))|^\gamma|u_n(s)|^{(q/2-1)\gamma}+|u(s-r_1(s))|^\gamma|u(s)|^{(q/2-1)\gamma}]\d \xi\d s
\\\leq&\int_0^t\int |u_n(s-r_1(s))|^2+|u(s-r_1(s))|^2+C[|u_n(s)|^{q}+|u(s)|^{q}+1]\d\xi \d s\\\leq &C\int_{-h}^t[\|u_n(s)\|_{\mathbb{H}}^2+\|u(s)\|_{\mathbb{H}}^2+\|u_n(s)\|_V^q+\|u(s)\|_V^q+1]\d s.\endaligned$$
Then we obtain for every $t>0$
$$\int_0^t{ }_{\mathbb{X}^*}\langle \partial_i b_1^i(u_n(s-r_1(s)),u_n(s))-\partial_i b_1^i(u(s-r_1(s)),u(s)), \varphi\rangle_{\mathbb{X}}\d s\rightarrow0,n\rightarrow\infty.$$
The convergence for $b_2$ can be obtained similarly. Now for $c_1$ we have for every $t>0$
$$\aligned&\int_0^t{ }_{\mathbb{X}^*}\langle c_1(u_n(s-r_2(s)))-c_1(u(s-r_2(s))), \varphi\rangle_{\mathbb{X}}\d s
\\=&\int_0^t\int (c_1(u_n(s-r_2(s)))-c_1(u(s-r_2(s))))\varphi\d\xi\d s\endaligned$$
Similarly, we obtain that
$$\aligned& \int_0^t\int |c_1(u_n(s-r_2(s)))-c_1(u(s-r_2(s)))|\d\xi\d s\\\leq&C\|\varphi\|_{C(O)}\int_0^t\int [|u_n(s-r_2(s))|+|u(s-r_2(s))|+1]\d \xi\d s.
\endaligned$$
Similarly as above we obtain for every $t>0$
$$\int_0^t{ }_{\mathbb{X}^*}\langle c_1(u_n(s-r_2(s)))-c_1(u(s-r_2(s))), \varphi\rangle_{\mathbb{X}}\d s\rightarrow0, n\rightarrow\infty.$$
We obtain the similar convergence results for $c_2$. Then combining  the estimates above,  (2.4) in (H1) follows. By the assumptions for $\sigma$ we obtain that
if $u_n$ converge to $u$ in $L^2_{\rm{loc}}([-h,\infty);\mathbb{H})$ then
$$\lim_{n\rightarrow\infty}\int_0^t\|B^*(s,u_s^n)(v)-B^*(s,u_s)(v)\|_{l_2}\d s=0.$$
So (H1) follows.

Now for (H2);
by a similar calculation in [GRZ09, Section 5] we obtain that
$$\aligned&\int_0^T{ }_{\mathbb{X}^*}\langle A_1(u(t))+A_2(u_t),u(t)\rangle_{\mathbb{X}}\d t\\\leq&\int_0^T[-\mathcal{N}_1(u(t))+C(1+\|u_t\|_\mathfrak{C}^2)]\d t
\\&-\int_0^T[{ }_{\mathbb{X}^*}\langle b_1^i(u(t-r_1(t)),u(t))+\int_{-h}^0b_2^i(r,u(t+r),u(t))\d r, \partial_i u(t)\rangle_{\mathbb{X}}\\&+{ }_{\mathbb{X}^*}\langle c_1(u(t-r_2(t)))+\int_{-h}^0c_2(r,u(t+r))\d r, u(t)\rangle_{\mathbb{X}}]\d t\\\leq&\int_0^T[-\mathcal{N}_1(u(t))+C(1+\|u_t\|_\mathfrak{C}^2)
\\&+C\int \left(|u(t-r_1(t))||u(t)|^{\frac{q}{2}-1}|\nabla u(t)|+\int_{-h}^0|u(t+r)||u(t)|^{\frac{q}{2}-1}|\nabla u(t)|\d r\right)\d \xi\\&+{ }_{\mathbb{X}^*}\langle c_1(u(t-r_2(t)))+\int_{-h}^0c_2(r,u(t+r))\d r, u(t)\rangle_{\mathbb{X}}]\d t\\\leq&\int_0^T[-\frac{1}{2}\mathcal{N}_1(u(t))+C(1+\|u_t\|_\mathfrak{C}^2)]\d t+c\int_{-h}^0\mathcal{N}_1(u(t))\d t .\endaligned$$

Now for (H3); for $u\in C([-h,\infty);\mathbb{X})$ we have
$$\aligned&\|\partial^2_{ij}a(u(s))\|_{\mathbb{X}^*}
\leq C\int [|u(s)|^{q-1}+1]\d \xi\leq C(\mathcal{N}_1(u(s))+1).
\endaligned$$
Now for $b$ we have
$$\aligned&\|\partial_i b_1^i(u(s-r_1(s)),u(s))\|_{\mathbb{X}^*}\leq C\|b_1(u(s-r_1(s)),u(s))\|_{L^1(O)}
\\\leq&C\int |u(s-r_1(s))||u(s)|^{q/2-1}\d \xi.
\endaligned$$
Now for $1<\gamma<\frac{q}{q-1}$ we have
$$\aligned&\int |u(s-r_1(s))|^\gamma|u(s)|^{(q/2-1)\gamma}\d \xi
\\\leq&\int |u(s-r_1(s))|^2+C[|u(s)|^{q}+1]\d\xi \\\leq &C[\|u_s\|_{\mathfrak{C}}^2+\mathcal{N}_1(u(s))+1].\endaligned$$
Similarly, we obtain that
$$\| \int_{-h}^0\partial_i b_2^i(u(s+r),u(s))\|_{\mathbb{X}^*}^\gamma\leq C[\|u_s\|_{\mathfrak{C}}^2+\mathcal{N}_1(u(s))+1].$$
 Now for $c_1, c_2$ we also have
$$\aligned&\|c_1(u(s-r_2(s)))\|_{\mathbb{X}^*}+\|\int_{-h}^0c_2(r,u(r))\d r\|_{\mathbb{X}^*}
\\\leq& C[\|u_s\|_{\mathfrak{C}}+1].
\endaligned$$
$$\|B(t,u_t)\|^2_{L_2(l^2;\mathbb{H})}\leq  C(1+\|u_t\|_\mathfrak{C}^2).$$
Then (H3) follows. Now the results follow from Theorem 2.2.\qed

\vskip.10in

Now we prove the uniqueness. We consider the case where
\begin{equation}\aligned dX(t)=&[\Delta\Phi(X(t))+c(\xi)X(t)+c_1(\xi)X(t-r_2(t))\\&+\int_{-h}^0c_2(r,\xi)X(t+r)\d r]\d t
+\sigma_i(\xi)X(t-r_3(t))\d W^i(t),\endaligned\end{equation}
$$X(t,\xi)=0, \quad (t,\xi)\in \mathbb{R}^+\times \partial O, $$
$$X(t)=\psi(t), -h\leq t\leq 0,$$
where $c, c_1, c_2, \|\sigma\|_{l^2}\in W^{1,2}_0(O)$ and $\Phi:\mathbb{R}\rightarrow\mathbb{R}$ is a continuous function satisfying $(\Phi(s)-\Phi(t))(s-t)\geq0$, $\Psi(s)s\geq m_1|s|^q-m_2$, $|\Psi(s)|\leq m_3|s|^{q-1}+m_4$, $s,t\in\mathbb{R}, m_1,m_3,m_4>0,m_2\geq0$.
In this case, we choose the following Gelfand triple:
$$V=L^q(O)\subset W^{-1,2}(O)=:H_0\cong H_0^*=: W_0^{1,2} (O)\subset (L^q(O))^*=V^*.$$
From [PR07, Chapter 4], we only need to check
(H4) for $c$:
we have
$$\aligned&{ }_{V^*}\langle c_1(\xi)u(t-r_2)-c_1(\xi)v(t-r_2), u(t)-v(t)\rangle_V\\\leq&C\|c_1(\xi)(u(t-r_2)-v(t-r_2))\|_{H_0}\|(u(t)-v(t))\|_{H_0}\\\leq&C\|u_t-v_t\|_{C([-h,0];H_0)}^2.\endaligned$$
For $c, c_2, \sigma$ we have similar estimates.
Then we obtain:

\beg{thm}\label{Tsm}
 For every $\psi\in L^q(-h,0;V)\cap  C([-h,0];  H_0) $ with $\mathcal{N}(\psi)<\infty$,
    (5.17)
    has a unique (probabilistically) strong solution $X\in C([-h,\infty),H_0)$ satisfying for every $T>0, q_1\in\mathbb{N}$
$$\E^P\left(\sup_{t\in[-h,T]}\|X(t)\|_\mathbb{H}^{2q_1}+[\int_0^T\mathcal{N}_1(X(t))\d t]^{q_1} \right)   < \infty.$$

\end{thm}

\vskip.10in

%------------------------------------------------------------------------------------------------------------------------
%--------------------------------------      References     ------------------------------------------------
%-----------------------------------------------------------------------------------------------------------------------
% \input{bib}

\end{document}